\newcommand{\lebn}
\theoremstyle{plain}
\newtheorem{theorem}[equation]{Theorem}
\newtheorem{conjecture}[equation]{Conjecture}
\newtheorem{lemma}[equation]{Lemma}
\theoremstyle{definition}
\newtheorem{remark}[equation]{Remark}
\numberwithin{equation}{section}
\newcommand{\D}{\Delta}
\tikzset{nodc/.style={circle,draw=blue!50,fill=pink!80,inner sep=1.6pt}}
\tikzset{nodr/.style={circle,draw=black,fill=green!50!black,inner sep=1.6pt}}
\tikzset{nodel/.style={circle,draw=black,inner sep=2.2pt}}
\tikzset{nodinvisible/.style={circle,draw=white,inner sep=2pt}}
\tikzset{nodpale/.style={circle,draw=gray,fill=gray,inner sep=1.6pt}}
\tikzset{nod1/.style={circle,draw=black,fill=black,inner sep=1pt}}
\tikzset{nod2/.style={circle,draw=black,fill=blue!75!black,inner sep=1.6pt}}
\tikzset{nod2w/.style={circle,draw=black,fill=white,inner sep=1.6pt}}
\tikzset{nodgs/.style={circle,draw=black,dotted,fill=gray,inner sep=1.6pt}}
\tikzset{nod3/.style={circle,draw=black,fill=black,inner sep=1.8pt}}
\tikzset{noddiam/.style={diamond,draw=black,inner sep=2pt}}
\tikzset{nodw/.style={circle,draw=black,inner sep=1.8pt}}
 \def\@textbottom{\vskip \z@ \@plus 10pt}
 \let\@texttop\relax
\begin{document}

\bibliographystyle{plain}

\title[Total coloring of planar graphs with maximum degree eight]{%On the total coloring of planar graphs with maximum degree eight} 
A sufficient condition for  planar graphs with maximum degree eight to be totally $9$-colorable}

\author{Zakir Deniz and Hakan Guler}

\address{Department of Mathematics, D\"uzce University, D\"uzce, 81620, T{\"{u}}rk{\.{\.i}}ye.}
\email{zakirdeniz@duzce.edu.tr}

\address{Department of Mathematics, Kastamonu University, Kastamonu, 37150, T{\"{u}}rk{\.{\.i}}ye.}
\email{hakanguler19@gmail.com}

\thanks{The authors are supported by T\" UB\. ITAK, grant no:124F450}
\keywords{Coloring, total coloring, planar graph.}
\date{\today}
%\thanks{}
%\subjclass[2010]{}

\begin{abstract}
A total coloring of a graph $G$ is a coloring of the vertices and edges such that  two adjacent or incident elements receive different colors. The minimum number of colors required for a total coloring of a graph $G$ is called the total chromatic number, denoted by $\chi''(G)$. Let  $G$ be a planar graph of maximum degree eight. It is known that  $9\leq \chi''(G) \leq 10$. We here prove that $\chi''(G)=9$ when the graph does not contain any subgraph isomorphic to a $4$-fan.
%A total coloring of a graph $G$ is a coloring of vertices and edges such that  two adjacent or incident elements receive different colors.  Let  $G$ be a planar graph of maximum degree eight.  We prove that $G$ admits a total $9$-coloring when the graph does not contain any subgraph isomorphic to a $4$-fan.
\end{abstract}
\maketitle

%%%%%%%%%%%%%%%%%%%%%%%%%%%%%%%%%%%%%%%%%%%%%%%%%%%%%%%%%%%%
\section{Introduction}

All graphs in this paper are assumed to be simple. For terminology and notation not defined here, we refer the reader to \cite{west}. Let $G$ be a graph. We use $V(G)$, $E(G)$, $F(G)$, and $\Delta(G)$ to denote the vertex set, edge set, face set, and maximum degree of $G$, respectively. When the context is clear, we abbreviate $\Delta(G)$ to $\Delta$.
A total coloring of a planar graph $G$ is a coloring of $V(G) \cup E(G)$ such that any two adjacent or incident elements receive different colors. The minimum number of colors required for a total coloring of $G$ is called the total chromatic number, denoted by $\chi''(G)$. We refer the reader to the comprehensive survey by Geetha et al.~\cite{geetha} for progress on the total chromatic number of graphs.
Behzad \cite{behzad} and Vizing \cite{vizing} independently posed the following conjecture, known as the Total Coloring Conjecture.

\begin{conjecture}\label{conj:main}
For any graph $G$, $\chi''(G) \leq \Delta(G) + 2$.
\end{conjecture}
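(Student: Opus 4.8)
The plan is to proceed by contradiction with a minimal counterexample and to attack the problem by first fixing the edge colors and then reconciling the vertex colors, treating the two together as a single proper coloring of the total graph. Let $G$ be a counterexample that minimizes $|V(G)|+|E(G)|$, so that $\chi''(G)\ge \Delta+3$ while every smaller graph (in this measure) satisfies the bound. First I would establish the standard reducibility lemmas: if $G$ contains an edge $uv$ both of whose endpoints have controlled degree, then by minimality $G-uv$ admits a total coloring with $\Delta+2$ colors, and the removed edge together with its two endpoints forbids fewer than $\Delta+2$ colors, so the coloring extends, a contradiction. Iterating such arguments (deleting a low-degree vertex, contracting a short path, or recoloring along an alternating Kempe chain) forces $G$ to have large minimum degree and to avoid many small local configurations, which is the structural foothold that every attack on the conjecture needs.

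Next I would invoke Vizing's theorem to properly edge-color $G$ with $\Delta+1$ colors, leaving one color of the palette $\{1,\dots,\Delta+2\}$ in reserve, and then try to color the vertices. The obstruction is immediate: a vertex of degree $\Delta$ already sees $\Delta$ distinct edge colors on its incident edges together with the colors of its neighbours, so a single spare color does not suffice in general. To manufacture the extra slack I would recast the whole problem as a proper vertex coloring of the total graph $T(G)$, whose vertices are $V(G)\cup E(G)$ and whose maximum degree is $2\Delta$; the goal then becomes beating the trivial Brooks bound $\chi(T(G))\le 2\Delta+1$ down to $\Delta+2$ by exploiting that $T(G)$ is assembled from $G$ and its line graph, glued along the incidence pattern, so that its cliques are tightly structured and its neighbourhoods are far from complete.

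The hard part, and the genuine heart of the conjecture, will be the global coordination step. To drive $\chi(T(G))$ all the way down to $\Delta+2$ one must show that the elements of $V(G)\cup E(G)$ can be ordered so that each in turn receives a free color. For planar and other sparse classes I would run a discharging argument, assigning initial charge $d(v)-6$ to each vertex and $2\deg(f)-6$ to each face, so that the total charge is $6(|E|-|V|-|F|)=-12$ by Euler's formula, and then redistribute so that some reducible, and hence colorable, element must appear. The difficulty is that for general (in particular dense or non-planar) graphs there is no Euler budget to discharge against, and the only unconditional tool, the Lov\'asz Local Lemma / entropy-compression method of Molloy and Reed, currently yields merely $\chi''(G)\le \Delta+C$ for an absolute constant $C$. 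Closing the gap from this large $C$ down to exactly $2$ in full generality is precisely what remains open; my plan would therefore either restrict to graph classes where discharging supplies the missing two colors of slack, or sharpen the local-lemma dependency analysis around each element until the number of forbidden colors provably drops below $\Delta+2$.
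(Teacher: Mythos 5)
The statement you were asked to prove is the Total Coloring Conjecture of Behzad and Vizing, and the paper does not prove it --- it is stated as Conjecture~\ref{conj:main} precisely because it is open. The paper's actual contribution (Theorem~\ref{thm:main}) is a special case: planar graphs with $\Delta=8$ and no $4$-fan, for which it even obtains $\Delta+1$ colors. So there is no proof in the paper to match your attempt against, and your proposal, to its credit, is candid about this: your final paragraph concedes that closing the gap from the Molloy--Reed bound $\chi''(G)\le \Delta+C$ down to $\Delta+2$ ``is precisely what remains open.'' A plan whose central step is an acknowledged open problem is not a proof, and no amount of scaffolding around it (minimal counterexample, Vizing edge-coloring plus a reserve color, the total graph $T(G)$ with $\Delta(T(G))\le 2\Delta$) changes that. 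Concretely, the fatal gap is the ``global coordination step'': after edge-coloring with $\Delta+1$ colors, a vertex of degree $\Delta$ can see up to $2\Delta$ forbidden colors among $\Delta+2$ available, so greedy completion fails, and you offer no mechanism --- Kempe exchanges, list-coloring arguments, or otherwise --- that provably recovers the needed slack for general graphs. Your discharging fallback cannot rescue the general statement either, since it is anchored to Euler's formula and hence to planarity (or bounded genus/sparsity); the conjecture quantifies over \emph{all} graphs, where no such charge budget exists, exactly as you note.

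It is worth observing that your toolkit does echo the method the paper uses for its special case: minimal counterexample, reducibility lemmas in the style of its Lemmas~\ref{lem:min-deg}--\ref{lem:8-83-2--38} (delete an element, extend a $9$-coloring by recoloring along small exchanges), and a vertex/face discharging with total charge $-8$ from $\mu(v)=d(v)-4$, $\mu(f)=\ell(f)-4$. Your variant charges ($d(v)-6$ and $2\ell(f)-6$, total $-12$) are an equally valid normalization for planar discharging. But the paper deploys this machinery only where it can succeed --- a restricted planar class with $\Delta=8$ --- whereas your proposal aims it at the full conjecture, where the machinery has no purchase. The honest verdict: this is a reasonable survey of known attack routes and their obstructions, not a proof, and it should be presented as such.
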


Conjecture \ref{conj:main} has been proved by Rosenfeld \cite{rosenfeld} for $\Delta = 3$, and by Kostochka \cite{kostachka} for $\Delta \leq 5$. For planar graphs, the conjecture has been verified by Borodin \cite{borodin} for $\Delta \geq 9$, by Yap \cite{yap} for $\Delta \geq 8$, and by Sanders and Zhao \cite{sanders} for $\Delta = 7$. Therefore, the only remaining open case for planar graphs is when $\Delta = 6$. In this case, some partial results are known: Sun et al.~\cite{sun} proved that every planar graph $G$ with maximum degree $6$ is totally $8$-colorable if no two triangles in $G$ share a common edge. In a recent paper, Zhu and Xu \cite{zhu} improved this result by proving that $\chi''(G) \leq 8$ when $\Delta = 6$ and $G$ does not contain any subgraph isomorphic to a $4$-fan (see Figure~\ref{fig:4-fan}).

It is known that $\chi''(G) \geq \Delta + 1$, since any two adjacent or incident elements must receive different colors. In 1989, Sanchez-Arroyo \cite{sanchez} proved that, in general, it is NP-complete to decide whether $\chi''(G) = \Delta + 1$. For planar graphs, it has been shown that $\chi''(G) = \Delta + 1$ when $\Delta \geq 9$ \cite{borodin,kowalik,wang2007}. Thus, it remains an open question whether every planar graph with maximum degree $\Delta \in \{4, 5, 6, 7, 8\}$ is totally $(\Delta + 1)$-colorable.
Note that $\chi''(G) = \Delta + 2=5$ when $G$ is isomorphic to the complete graph with  four vertices.

Some recent papers are devoted to proving that $\chi''(G)=9$ when $\Delta=8$, under additional structural restrictions. For instance, Hou et al.~\cite{hou} proved that $\chi''(G)=9$ when $\Delta = 8$ and $G$ contains no $5$- or $6$-cycles. Later, Roussel and Zhu \cite{rous} showed that $\chi''(G) = 9$ when $\Delta = 8$ and, for each vertex $v$, there exists an integer $k_v \in \{3, 4, 5, 6, 7, 8\}$ such that no $k_v$-cycle contains $v$. On the other hand, Wang et al.~\cite{wang2017} proved that $\chi''(G)= 9$ when $\Delta =8$ and $G$ contains no adjacent $p,q$-cycles for some $p, q \in \{3, 4, 5, 6, 7\}$. More recently, Wang et al. \cite{wang2023} generalized this result and showed that $\chi''(G)=9$ when $\Delta =8$ and $G$ has no adjacent $p,q$-cycles for some $p, q \in\{3, 4, 5, 6, 7, 8\}$.

In this paper, we provide a sufficient condition for planar graphs with maximum degree eight to be totally $9$-colorable.

\begin{theorem}\label{thm:main}
Let $G$ be a planar graph of maximum degree eight. If $G$ does not contain any subgraph isomorphic to a $4$-fan (see Figure~\ref{fig:4-fan}), then $G$ has a total $9$-coloring.
\end{theorem}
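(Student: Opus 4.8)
The natural approach is the discharging method applied to a minimal counterexample, organized in three stages: reduction to a well-connected minimal counterexample, a catalogue of reducible configurations, and a discharging analysis in which the absence of a $4$-fan is the decisive hypothesis. First I would let $G$ be a counterexample minimizing $|V(G)|+|E(G)|$ and fix a plane embedding. Every proper subgraph of $G$ still has maximum degree at most $8$ and contains no subgraph isomorphic to a $4$-fan, hence by minimality is totally $9$-colorable; this is the engine behind every reduction. Standard first steps show that $G$ is connected and in fact $2$-connected, so that each face is bounded by a cycle and the usual face-degree bookkeeping applies. I record the conventions that a $k$-vertex (resp.\ $k^{+}$-, $k^{-}$-vertex) is a vertex of degree $k$ (resp.\ $\ge k$, $\le k$), and note that the $4$-fan is the join $K_1+P_5$, a central vertex $v$ together with a path $u_1u_2u_3u_4u_5$ in its neighborhood producing four triangles through $v$; forbidding it means precisely that no vertex carries a path on five vertices in its neighborhood.

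Second, I would build the list of reducible configurations. The template is always the same: assume a configuration occurs, delete or uncolor a small set of edges and vertices, color the resulting smaller graph by minimality, and then extend. The extension step counts forbidden colors, using that an edge $uv$ sees at most $\deg(u)+\deg(v)$ colors while a vertex of degree $d$ sees at most $2d$; thus elements incident only to low-degree vertices extend greedily, and when the naive count fails by a little one recovers a free color by a Kempe-chain recoloring along a suitable bicolored path. In this way I would establish the expected hierarchy of constraints on the light structure of $G$ — that $\delta(G)\ge 2$, that the neighbors of $2$- and $3$-vertices are forced to have high degree, that adjacent low-degree vertices are forbidden, and that triangles containing light vertices are tightly restricted. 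The no-$4$-fan hypothesis already participates here, since it is preserved under the reductions and rules out the densest triangle clusters.

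Third comes the discharging. I would assign the charge $\mu(x)=\deg(x)-4$ to every vertex and every face; by Euler's formula together with $\sum_v\deg(v)=\sum_f\deg(f)=2|E(G)|$, the total charge equals $4|E(G)|-4|V(G)|-4|F(G)|=-4(|V(G)|-|E(G)|+|F(G)|)=-8<0$. The negative charges sit exactly on $2$- and $3$-vertices and on triangular faces, whereas $5^{+}$-vertices and $5^{+}$-faces are positive; in particular each $8$-vertex starts with $+4$. I would then fix redistribution rules sending charge from $5^{+}$-vertices and large faces toward their incident triangles and their light neighbors, and verify that after discharging every vertex and face has nonnegative final charge, contradicting the total of $-8$ and completing the proof.

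The crux — and the step I expect to be hardest — is verifying nonnegativity at the high-degree vertices, above all the $8$-vertices, after they have paid out to all incident triangles and light neighbors simultaneously. This is exactly where the absence of a $4$-fan is essential: it caps how many triangles can occur consecutively around a single vertex, hence bounds the total demand placed on an $8$-vertex, and it must be combined with the reducibility lemmas that pin down the degrees of that vertex's neighbors. Calibrating the discharging rules so that this accounting closes uniformly across every local type of $8$-vertex, and across the mixed cases involving $5$-, $6$-, and $7$-vertices, is the technical heart of the argument; by contrast the faces should balance routinely once each large face is made to subsidize its incident triangles.
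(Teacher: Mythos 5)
Your proposal follows the same skeleton as the paper's proof: a minimal counterexample, a catalogue of reducible configurations obtained by uncolor--recolor arguments, and discharging with $\mu(x)=d(x)-4$ on both vertices and faces, giving total charge $-8$. However, as written it is a plan rather than a proof, and the gap sits exactly where you place the ``crux.'' In a discharging argument the mathematical content \emph{is} the specific list of reducible configurations together with the specific rules that make the accounting close; you leave both unspecified. The naive bounds do not suffice: an $8$-vertex may a priori have one $2$-neighbour, several $3$-neighbours, and up to six incident $3$-faces (the no-$4$-fan bound), and then $4 - 6\cdot\tfrac12 - 1 - \text{(payments to $3$-neighbours)}$ is negative. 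Closing this requires configuration lemmas far beyond ``neighbours of light vertices have high degree'': for instance, that a $7$-vertex adjacent to a $3$-vertex on a $3$-face has no second such neighbour; that an $8$-vertex adjacent to a $2$-vertex on a $3$-face has no $3$-neighbour on a $3$-face; and several intricate configurations mixing $3$-neighbours, $2$-neighbours, consecutive $3$-faces and $4$-faces around an $8$-vertex, whose reducibility proofs need long chains of coordinated recolorings (not single Kempe exchanges) and occupy most of the paper. These lemmas are precisely what guarantee that every ``overloaded'' $8$-vertex is incident to a $5^{+}$-face containing two of its $3^{-}$-neighbours, which is what rescues the charge count.

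There is also a concrete design error in the one place where you do commit to a rule: you propose that large faces ``subsidize their incident triangles,'' whereas in the paper $3$-faces are fully funded by their incident $5^{+}$-vertices (each $3$-face receives exactly $1$, either as $2\cdot\tfrac12$ or $3\cdot\tfrac13$, which is consistent thanks to the lemma that an edge $uv$ with $d(u)\le 4$ has $d(u)+d(v)\ge 10$), and the surplus of $5^{+}$-faces is instead sent to their incident $8$-vertices. This direction of flow is essential: the tight cases for $8$-vertices only balance because they receive at least $\tfrac13$ back from a $5^{+}$-face whose existence is forced by the reducible configurations. With your proposed flow the verification at $8$-vertices would not close. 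A minor further point: you assert $2$-connectivity of the minimal counterexample as a standard step; it is plausible (via a color-permutation argument at a cut vertex), but the paper neither proves nor needs it, and your outline should not rely on unproved structural claims. In short, the approach is the right one, but everything that makes the theorem true for $\Delta=8$ specifically---the configuration catalogue, the calibrated rules, and the case-by-case verification---is missing.
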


\begin{figure}[htb]
\centering   
\begin{tikzpicture}[scale=1]
\node [nod2w] at (0,0) (v)  {};
\node [nod2w] at (-1.5,-.25) (v5)   {}
	edge [] (v);
\node [nod2w] at (-1.25,1) (v6)  {}
	edge [] (v)
	edge [] (v5);	
\node [nod2w] at (1.25,1) (v1)  {}
	edge [] (v);
\node [nod2w] at (1.5,-.25) (v2)  {}
	edge [] (v)
	edge [] (v1);
\node [nod2w] at (0,1.5) (v7)   {}
	edge [] (v)
	edge [] (v6)
	edge [] (v1);					
\end{tikzpicture} 
\caption{A $4$-fan}
\label{fig:4-fan}
\end{figure}

Given a planar  graph $G$, we denote by $\ell(f)$ the length of a face $f$, and by $d(v)$ the degree of a vertex $v$. A \emph{$k$-vertex} is a vertex of degree $k$. A \emph{$k^{-}$-vertex} is a vertex of degree at most $k$ while a \emph{$k^{+}$-vertex} is a vertex of degree at least $k$. The notions of \emph{$k$-face}, \emph{$k^-$-face}, and \emph{$k^+$-face} are defined analogously. A vertex $u\in N(v)$ is called \emph{$k$-neighbour} (resp.~\emph{$k^-$-neighbour}, \emph{$k^+$-neighbour}) of $v$ if $d(u)=k$ (resp.~$d(u)\leq k$, $d(u)\geq k$). 

For a vertex $v\in V(G)$, we use $m_k(v)$ to denote the number of $k$-faces incident with $v$, and $n_k(v)$ to denote  the number of $k$-vertices adjacent to $v$. A cycle of length $3$ is called a \emph{triangle}. A \emph{$(p,q,r)$-triangle} is a $3$-face whose boundary is formed by vertices of degrees $p,q,r$. Two faces $f_1$ and $f_2$ are said to be adjacent if they share a common edge. A set of faces $f_1,f_2,\ldots,f_k$ around a vertex $v$ are called \emph{consecutive} if each pair of $f_i, f_{i+1}$, for $1\leq i\leq k-1 $, are  adjacent.

\section{The Proof of Theorem \ref{thm:main}} \label{sec:premECE}

\subsection{The Structure of Minimum Counterexample} \label{sub:premECE}~~\medskip

Let  $G$ be a minimal counterexample to Theorem \ref{thm:main} with minimum $|V(G)\cup E(G)|$. So, $G$  does not contain any subgraph isomorphic to a $4$-fan, and $G$ does not admit any total $9$-coloring, but for any $x\in V(G)\cup E(G)$, the graph $G-x$ admits a total $9$-coloring. Obviously,  $G$ is a connected graph.

\begin{lemma}\label{lem:min-deg}
$\delta(G)\geq 2$. 
\end{lemma}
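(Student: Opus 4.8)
The plan is a standard deletion-and-extension (reducibility) argument. Suppose toward a contradiction that $G$ has a vertex $v$ with $d(v)\le 1$. Since $G$ is connected and is a genuine counterexample (so it has at least one edge, an edgeless graph being trivially $9$-colourable), $v$ cannot be isolated; hence $d(v)=1$. Let $u$ be its unique neighbour and write $e=uv$. The idea is to delete $v$, invoke minimality to colour what remains, and then reinstate $e$ and $v$ one at a time, checking that at each step the number of already-forbidden colours is strictly less than $9$.

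Concretely, first I would form $G'=G-v$, which also deletes the edge $e$. Because $|V(G')\cup E(G')| < |V(G)\cup E(G)|$ and $G'$ inherits all the hypotheses (it is planar, has maximum degree at most $8$, and, being a subgraph of $G$, contains no $4$-fan), the minimality of $G$ yields a total $9$-colouring $\phi$ of $G'$. It then remains only to extend $\phi$ to $e$ and to $v$.

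Next I would colour the edge $e$. The elements constraining $e$ are its two endpoints $u,v$ and the edges sharing an endpoint with it; since $v$ is not yet coloured, the forbidden colours are $\phi(u)$ together with the colours of the $d(u)-1$ edges incident with $u$ other than $e$. This is at most $1+(d(u)-1)=d(u)\le \Delta=8$ colours, so at least one of the nine colours remains available for $e$. Finally I would colour $v$: it is incident only with $e$ and adjacent only with $u$, so at most $2$ colours are forbidden and at least $7$ remain. This produces a total $9$-colouring of $G$, contradicting the choice of $G$, and hence $\delta(G)\ge 2$.

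There is no serious obstacle here; the only point requiring care is the \emph{order} of extension. Colouring the edge $e$ before the vertex $v$ is exactly what keeps the count for $e$ at $d(u)$ rather than $d(u)+1$, and it leaves the degree-one vertex $v$ — with only two neighbouring coloured elements — to be handled trivially at the end. This ordering principle (saturate an edge while its far endpoint is still uncoloured, then colour the low-degree vertex last) is the prototype for the reductions used in the later discharging analysis.
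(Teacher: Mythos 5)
Your proof is correct and follows essentially the same approach as the paper: the paper deletes the edge $uv$ and then uncolors the degree-one vertex (rather than deleting that vertex outright, as you do), but both arguments arrive at the same partial coloring and extend it in the same order---edge first, while its degree-one endpoint is uncolored so that at most $d(u)\le\Delta=8$ colors are forbidden, then the vertex last with only two forbidden colors. The only cosmetic difference is that you rule out isolated vertices explicitly, whereas the paper relies on its earlier observation that $G$ is connected.
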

\begin{proof}
Assume for a contradiction that $G$ has a vertex $u$ of degree $1$ with $N(u)=\{v\}$. By the minimality of $G$,  the graph $G-uv$ admits a proper $9$-coloring, and uncolor the vertex $u$. Since $d(u)+d(v)\leq \D+1=9$, and $u$ is uncolored vertex, we deduce that $uv$ has an available color, and so we color it. Later we color $u$ with an available color as it has two forbidden colors.
\end{proof}

Consider a $4^-$-vertex $u$ of $G$. If we have a proper $9$-coloring of $G$ in which $u$ is uncolored, then we can easily extend this coloring to the whole $G$ since $u$ has at most $8$ forbidden colors. Therefore, we assume that such vertices are colored at the end, as stated below.

\begin{remark}\label{rem:4-vert-later}
All $4^-$-vertices are colored at the end, since those vertices have always an available color.
\end{remark}

\begin{lemma}\label{lem:uv-10}
If $uv$ is an edge, and $u$ is a $4^-$-vertex, then $d(u)+d(v)\geq 10$.
\end{lemma}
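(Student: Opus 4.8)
The plan is to argue by contradiction using the standard reducibility scheme, exactly in the spirit of the proof of Lemma~\ref{lem:min-deg}. Suppose $uv$ is an edge with $u$ a $4^-$-vertex but $d(u)+d(v)\le 9$. By the minimality of $G$, the graph $G-uv$ admits a total $9$-coloring $\phi$. I would then modify $\phi$ into a total $9$-coloring of $G$ itself, contradicting the assumption that $G$ is a counterexample.

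First I would uncolor the vertex $u$ in $\phi$; this keeps the partial coloring proper and is legitimate because, by Remark~\ref{rem:4-vert-later}, a $4^-$-vertex can always be recolored at the very end. I would then bound the colors forbidden on the still-uncolored edge $uv$. In $G$ the edge $uv$ is adjacent to the $d(u)-1$ remaining edges at $u$ and the $d(v)-1$ remaining edges at $v$, and it is incident to $u$ and $v$; since $u$ is uncolored only $v$ contributes, so the number of forbidden colors is at most $(d(u)-1)+(d(v)-1)+1=d(u)+d(v)-1\le 8<9$. Hence $uv$ has an available color, which I assign.

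Finally I would color $u$ itself. Since $u$ is a $4^-$-vertex it has at most $d(u)$ incident (now colored) edges and at most $d(u)$ neighbours, for a total of at most $2d(u)\le 8<9$ forbidden colors, so an available color remains; this is precisely what Remark~\ref{rem:4-vert-later} guarantees. The resulting coloring is a total $9$-coloring of $G$, the desired contradiction.

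I expect no genuine obstacle here, as the whole argument reduces to a short counting estimate. The only point that needs care, and the reason the hypothesis threshold is exactly $10$, is that one must uncolor $u$ \emph{before} coloring $uv$: keeping $u$ colored would leave up to $d(u)+d(v)\le 9$ forbidden colors on $uv$ and could exhaust the palette, whereas uncoloring $u$ gains exactly the one color of slack that makes the edge colorable. This deferred-coloring device for $4^-$-vertices is exactly what Remark~\ref{rem:4-vert-later} was set up to license.
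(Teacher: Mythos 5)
Your proposal is correct and follows essentially the same route as the paper: take a total $9$-coloring of $G-uv$ guaranteed by minimality, uncolor $u$, observe that $uv$ then has at most $d(u)+d(v)-1\le 8$ forbidden colors so it can be colored, and finish by coloring $u$ via Remark~\ref{rem:4-vert-later}. Your write-up merely makes the counting explicit where the paper leaves it implicit.
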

\begin{proof}
Suppose that $uv$ is an edge such that $u$ is a $4^-$-vertex and $d(u)+d(v)\leq 9$. By the minimality of  $G$, the graph $G-uv$ has a proper $9$-coloring in which $u$ is uncolored. Note that it  suffices to color only the edge $uv$ by Remark \ref{rem:4-vert-later}. Since $d(u)+d(v)\leq 9$ and $u$ is uncolored, there is at least one available color for $uv$, and so we color it.
\end{proof}

\begin{lemma}\label{lem:8-has-one-2}
An $8$-vertex is adjacent to at most one $2$-vertex.
\end{lemma}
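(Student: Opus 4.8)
The plan is to argue by contradiction. Suppose some $8$-vertex $v$ is adjacent to two $2$-vertices $u_1,u_2$, and write $N(u_i)=\{v,x_i\}$. Applying Lemma~\ref{lem:uv-10} to the edge $u_ix_i$ gives $d(u_i)+d(x_i)\ge 10$, so $d(x_i)=8$; in particular each $x_i$ is an $8$-vertex, and (again by Lemma~\ref{lem:uv-10}) $x_i\notin\{u_1,u_2\}$, since a $2$-vertex cannot be adjacent to another $2$-vertex. I will delete the two $2$-vertices: since $H:=G-\{u_1,u_2\}$ has fewer vertices, contains no $4$-fan, and has maximum degree at most $8$, by minimality it has a total $9$-coloring $\phi$. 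By Remark~\ref{rem:4-vert-later} I may keep $u_1,u_2$ uncolored, so it remains only to color the four edges $vu_1,u_1x_1,vu_2,u_2x_2$; once these are colored, $u_1$ and $u_2$ can each be colored, since each then has only four forbidden colors.

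Write $U_w$ for the set of colors appearing on $w$ and on the edges of $H$ incident with $w$. In $H$ one has $d_H(v)=6$ and $d_H(x_1)=d_H(x_2)=7$, hence $|U_v|\le 7$ and $|U_{x_i}|\le 8$. The four edges to be colored form the path $x_1u_1,\,u_1v,\,vu_2,\,u_2x_2$, and extending $\phi$ amounts to properly coloring this path subject to $x_1u_1\notin U_{x_1}$, $u_1v,vu_2\notin U_v$, and $u_2x_2\notin U_{x_2}$. This is a list coloring of a path on four edges whose two middle lists are \emph{equal} (both being the complement of $U_v$, of size $\ge 2$) and whose two outer lists are the complements of $U_{x_1},U_{x_2}$ (each of size $\ge 1$). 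A short case check, using crucially that the two middle lists coincide, shows that such a coloring exists \emph{except} in the single degenerate configuration where $|U_v|=7$ with complement $\{\alpha,\beta\}$ and each $x_i$ is \emph{saturated}, that is $|U_{x_i}|=8$ with complement $\{\alpha\}$ for one common color $\alpha$. In every non-degenerate case this yields a total $9$-coloring of $G$, contradicting the choice of $G$. (If $x_1=x_2$ the four edges instead form a $4$-cycle whose two lists at the common vertex have size $\ge 3$, and the same conclusion holds more easily.)

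It remains to rule out the degenerate case, where $u_ix_i$ is forced to take the only color $\alpha$ available at the saturated vertex $x_i$, after which $u_1v$ and $vu_2$ are both forced to be $\beta$, which is impossible. My plan is to modify $\phi$ locally so that $\alpha$ is no longer free at $v$. The cleanest move is to recolor the \emph{vertex} $v$ to $\alpha$: this is legitimate provided no neighbor of $v$ in $H$ is colored $\alpha$ (its six edges already avoid $\alpha$). After the switch $v$ misses $\{\beta,c_0\}$, where $c_0$ is the previous color of $v$ and $\alpha\notin\{\beta,c_0\}$; so I may set $x_1u_1=u_2x_2=\alpha$, $u_1v=c_0$, $vu_2=\beta$, completing a total $9$-coloring of $G$. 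Symmetric alternatives are to recolor a single edge $vz$ with $z\ne x_1,x_2$ to $\alpha$, or to recolor the vertex $x_1$ to $\alpha$ (which changes its missing color to $c(x_1)\ne\alpha$), each of which likewise breaks the deadlock.

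The main obstacle is precisely to guarantee that one of these local modifications is admissible, i.e.\ that $\alpha$ can be pushed onto $v$ (or onto $x_1$) against the coloring $\phi$. I expect to handle this with an alternating $(\alpha,\gamma)$-chain of Kempe type: if every neighbor of $v$ already carries $\alpha$, one selects a color $\gamma$ realized at $v$, swaps $\alpha$ and $\gamma$ along the chain emanating from $v$, and checks that the swap frees $\alpha$ at $v$ without recreating the forbidden pattern at $x_1$ or $x_2$. Verifying that such a chain is well behaved, in particular that it does not terminate at $x_1$ or $x_2$ in a way that reintroduces the deadlock, is the delicate point, and is where the degree bound $\Delta=8$ and the saturation of $x_1,x_2$ must be used.
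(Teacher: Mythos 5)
Your overall strategy is genuinely different from the paper's and most of it is sound: you delete both $2$-vertices and reduce the problem to a list edge-coloring of the path $x_1u_1vu_2x_2$, whereas the paper either performs local color swaps (when $v$ is adjacent to some $x_i$) or colors the surgered graph $G-u_1-u_2+vx_1+vx_2$ and transfers the colors of the added edges back to the four path edges. Your list analysis is correct: with $|U_v|\le 7$ and $|U_{x_i}|\le 8$, the only obstruction is the degenerate configuration you describe, where $v$ misses exactly $\{\alpha,\beta\}$ and both $x_1,x_2$ miss exactly $\alpha$. (A minor slip: when $x_1=x_2$ the two lists at the common vertex have size at least $2$, not $3$, since that vertex has degree $6$ in $H$; the conclusion survives because even cycles are edge-$2$-choosable.)

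The genuine gap is that the degenerate case is never actually resolved. If $v$ is adjacent to $x_1$ or $x_2$ in $G$ you are fine: the edge $vx_i$ lies in $H$ and $\alpha$ is missing at both of its ends, so recoloring $vx_i$ with $\alpha$ breaks the deadlock (this is essentially the paper's second case). The hard situation is $vx_1,vx_2\notin E(G)$. There, each of your proposed moves needs a hypothesis on $\phi$ that may fail: recoloring the vertex $v$ with $\alpha$ needs no neighbour of $v$ to be colored $\alpha$; recoloring an edge $vz$ with $\alpha$ needs $\alpha\notin U_z$; recoloring the vertex $x_1$ with $\alpha$ needs no neighbour of $x_1$ to be colored $\alpha$. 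Minimality hands you an arbitrary total $9$-coloring of $H$ with no control near $v,x_1,x_2$, and nothing prevents all three conditions from failing simultaneously (the color $\alpha$ may appear on or next to every relevant element). Your fallback, an $(\alpha,\gamma)$-Kempe swap, is not a legitimate operation for total colorings as stated: the set of elements colored $\alpha$ or $\gamma$ is not a disjoint union of paths and cycles (a vertex colored $\alpha$ can have several neighbours colored $\gamma$), and even a swap restricted to edges can create an edge--vertex conflict at an endpoint whose vertex color is $\alpha$ or $\gamma$. So ``swap along the chain and check it behaves'' is precisely the missing idea, not a routine verification. This is exactly the difficulty the paper's edge-addition trick is designed to avoid: coloring $G-u_1-u_2+vx_1+vx_2$ produces for free two distinct colors $\varphi(vx_1)\neq\varphi(vx_2)$, each simultaneously missing at $v$ and at the corresponding $x_i$, after which assigning $\varphi(vx_1)$ to $vu_2$ and $u_1x_1$, and $\varphi(vx_2)$ to $vu_1$ and $u_2x_2$, completes the coloring; the degenerate configuration can then never arise. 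To repair your proof you would either need such a surgery (which forces the reduced graph's coloring to cooperate) or a correct recoloring argument covering the case where all three local moves are blocked.
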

\begin{proof}
Let $v$ be an $8$-vertex. Assume to the contrary that $v$ has two $2$-neighbours $x_1,x_2$. Denote by $y_i$ the neighbour of $x_i$ other than $v$, for $i\in\{1,2\}$. 

First suppose that $vy_1,vy_2\notin E(G)$. If $y_1=y_2$, then we remove $x_1,x_2$ and add $vy_1$. Let $G'$ be the resulting graph. By the minimality of $G$, the graph $G'$ has a proper $9$-coloring $\varphi$. 
 Now we modify the coloring $\varphi$ with respect to $G$. Let us give $\varphi(vy_1)$ to each of $vx_1,y_1x_2$. Note that each of $vx_2$ and $x_1y_1$ has an available color, and so we color them. Also, we color $x_1 $ and $x_2$ by Remark \ref{rem:4-vert-later}. Observe that the resulting coloring is a proper $9$-coloring of $G$.
If $y_1\neq y_2$,  then we remove $x_1,x_2$ and add $vy_1,vy_2$. Let $G'$ be the resulting graph. By the minimality, $G'$ has a proper $9$-coloring $\varphi$. Now we modify the coloring $\varphi$ with respect to $G$.  Let us give $\varphi(vy_1)$ to each of $vx_2,x_1y_1$, and $\varphi(vy_2)$ to each of $vx_1,x_2y_2$, and color $x_1 $ and $x_2$ by Remark \ref{rem:4-vert-later}.  Observe that the resulting coloring is a proper $9$-coloring of $G$.

Now we suppose that $v$ is adjacent to at least one of $y_1,y_2$, say  $vy_1\in E(G)$. Consider a proper $9$-coloring $\varphi$ of $G-vx_2$. Suppose that $vx_2$ has no available color. This means that none of the edges incident to $v$ is colored with $\varphi(x_2y_2)$; otherwise there would exist an available color for $vx_2$.  If $\varphi(x_1y_1)=\varphi(x_2y_2)$, then we interchange the colors of $x_1y_1$ and $vy_1$, and later we color $vx_2$ with $\varphi(vy_1)$.  If $\varphi(x_1y_1)\neq \varphi(x_2y_2)$,  then we recolor $vx_1$ with $\varphi(x_2y_2)$, and we color $vx_2$ with $\varphi(vx_1)$.
\end{proof}

In what follows, we will show that certain configurations are \emph{reducible}, meaning they cannot appear as subgraphs of $G$. These configurations are illustrated in the figures. In each figure, black bullets represent vertices whose neighbours are exactly as drawn, while white bullets represent vertices that may have additional neighbours beyond those shown.

\begin{lemma}\label{lem:7-has-no-two-3-on-3face}
Let $v$ be a $7$-vertex. If $v$ is adjacent to a $3$-vertex on a $3$-face, then $v$  is not adjacent to any  other $3$-vertex on a $3$-face.
\end{lemma}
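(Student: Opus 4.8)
The plan is to show that this configuration is reducible. Suppose for contradiction that $v$ is a $7$-vertex adjacent to two distinct $3$-vertices $x_1,x_2$, where $vx_iy_i$ is a $3$-face for $i\in\{1,2\}$; write $z_i$ for the third neighbour of $x_i$, so that $N(x_i)=\{v,y_i,z_i\}$. Applying Lemma \ref{lem:uv-10} to the edges incident to the $3$-vertices $x_1,x_2$ gives $d(y_i),d(z_i)\geq 7$, so all the surrounding vertices have large degree. Note also that $x_1x_2\notin E(G)$, since otherwise Lemma \ref{lem:uv-10} would fail on the edge $x_1x_2$; hence $y_1\neq x_2$ and $y_2\neq x_1$, although $y_1=y_2$ remains possible.

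First I would delete the edges $vx_1,vx_2$ and invoke minimality: the graph $G-\{vx_1,vx_2\}$ contains no $4$-fan and is strictly smaller, so it admits a total $9$-coloring $\varphi$. I then uncolour $x_1$ and $x_2$, which is legitimate by Remark \ref{rem:4-vert-later} as they are $3$-vertices to be coloured last. Let $S$ be the set of colours appearing on $v$ together with its five remaining coloured edges, so $|S|=6$. For each $i$ the colours forbidden for $vx_i$ are exactly those of $S$ together with $\varphi(x_iy_i)$ and $\varphi(x_iz_i)$, whence the set $A_i$ of colours available for $vx_i$ satisfies $|A_i|\geq 9-8=1$. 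If I can pick \emph{distinct} colours $c_1\in A_1$ and $c_2\in A_2$, then colouring $vx_1,vx_2$ and finally $x_1,x_2$ (Remark \ref{rem:4-vert-later}) produces a total $9$-coloring of $G$, a contradiction. Such a choice is available unless $A_1=A_2=\{c\}$ for a single colour $c$.

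In this remaining rigid case the coloring is highly constrained. Writing $\{1,\dots,9\}\setminus S=\{c,p,q\}$, the equalities $|A_1|=|A_2|=1$ force $\{\varphi(x_1y_1),\varphi(x_1z_1)\}=\{\varphi(x_2y_2),\varphi(x_2z_2)\}=\{p,q\}$; thus the two colours $p,q$ occur on both triangles but on none of the elements incident to $v$. The idea now is to recolour a single element so that $A_1$ and $A_2$ cease to coincide. Two types of move suffice: (a) recolour one triangle edge, say $x_1y_1$, to a colour of $S\cup\{c\}$ not appearing at $y_1$, which changes $A_1$ to $\{c,p\}$ or $\{p\}$ while leaving $A_2=\{c\}$; or (b) recolour an edge $v\mu$, with $\mu$ a neighbour of $v$, to $p$ or $q$, which substitutes the freed colour of $S$ and enlarges both $A_1,A_2$ to a common $2$-element set $\{s_\mu,c\}$. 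After any such move I can choose distinct colours for $vx_1$ and $vx_2$ exactly as before, forcing the contradiction.

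The hard part will be guaranteeing that at least one of these moves is genuinely available. A move of type (a) is blocked precisely when the high-degree vertex $y_i$ (respectively $z_i$) already sees all seven colours of $S\cup\{c\}$, and a move of type (b) is blocked when every neighbour of $v$ sees both $p$ and $q$. I therefore expect the main labour to be a case analysis showing that simultaneous blockage of all these local options cannot occur; and in the fully rigid sub-case, where $d(y_i)=d(z_i)=8$ and these vertices see nearly every colour, I would finish with an alternating-path (Kempe-type) interchange between $c$ and one of $p,q$ through $y_1$ or $z_1$ to liberate a colour, once more reducing to the situation in which distinct colours are available for $vx_1$ and $vx_2$.
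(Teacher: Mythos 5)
Your reduction is sound up to the rigid case: after deleting both edges $vx_1,vx_2$ and uncolouring $x_1,x_2$, a contradiction indeed follows unless $A_1=A_2=\{c\}$, which forces $\{\varphi(x_iy_i),\varphi(x_iz_i)\}=\{p,q\}$ for both $i$. (This parallels the paper, which deletes only one edge $uv$ and uncolours both $3$-vertices before running its case analysis.) The gap is that you stop exactly where the content of the lemma begins. Simultaneous blockage of all your moves (a) and (b) is not excluded by any counting: it merely forces $d(y_i)=8$ with all nine colours appearing at $y_i$, and $z_i$ seeing all of $S\cup\{c\}$, which is perfectly consistent; so ``I expect the main labour to be a case analysis'' and ``I would finish with a Kempe-type interchange'' leave the decisive step unproved. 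Worse, the finishing tool you name can fail outright in total colouring: the occurrence of $c$ that blocks move (a) at $y_1$ may be the \emph{vertex} colour of $y_1$, in which case there is no $(p,c)$-alternating edge path through $y_1$ to swap, and an edge-Kempe interchange cannot remove a vertex--edge conflict.

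The missing idea is a swap that needs no availability hypothesis at all: interchange the colours of the two edges $vy_1$ and $x_1y_1$ meeting at the common neighbour $y_1$, where (say) $\varphi(x_1y_1)=p$ and $\varphi(vy_1)=s\in S$. This is always proper: the colour set at $y_1$ is unchanged; the new colour $p$ of $vy_1$ lies outside $S$, so it clashes with nothing at $v$; and the new colour $s$ of $x_1y_1$ differs from $q=\varphi(x_1z_1)$ while $x_1$ is uncoloured. (If $y_1=y_2$, properness of $\varphi$ forces $\varphi(x_2y_1)=q\neq p$, so the swap is still legal.) Afterwards the colours at $v$ are $(S\setminus\{s\})\cup\{p\}$, so $vx_1$ still has only $c$ available but $vx_2$ now has both $c$ and $s$ available; colouring $vx_2$ with $s$, then $vx_1$ with $c$, then $x_1,x_2$ by Remark \ref{rem:4-vert-later} yields a total $9$-colouring of $G$, the desired contradiction. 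This unconditional interchange at the common neighbour is precisely the move the paper's short proof is built on (``interchange the colors of $xv$ and $xu$''); with it your framework closes, but as written your argument has a genuine hole at its crux.
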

\begin{proof}
Let $v$ be adjacent to a $3$-vertex $u$ on a $3$-face $f_1$, and assume to the contrary that $v$  is adjacent to another $3$-vertex, say $w$, on a $3$-face $f_2$. By Lemma \ref{lem:uv-10}, we have $f_1\neq f_2$.
Consider a proper $9$-coloring of $G-uv$, and uncolor the vertices $u$ and $w$. We will find an appropriate color only for $uv$ by Remark \ref{rem:4-vert-later}.

First suppose that $f_1$ and $f_2$ are adjacent. We may suppose that the coloring is the one shown in Figure \ref{fig:conf7-384}. 
If $9\notin \{a,b\}$ then we color $vw$ with $9$, and $uv$ with $5$.  Similarly, if $b\neq 8$, then we color $vw$ with $8$, and $uv$ with $5$ where we note that $a\neq 8$ as the color of $ux$ is $8$. Therefore $a=9$ and $b=8$. Let us now interchange the colors of $xv$ and $xu$, and later we color $vw$ with $6$, and $uv$ with $5$.

Next suppose that $f_1$ and $f_2$ are not adjacent. We may suppose that the coloring is the one shown in Figure \ref{fig:conf7-38-38}. 
If $9\notin \{a,b\}$, then we color $vw$ with $9$, and $uv$ with $5$. Similarly, if $8\notin \{a,b\}$, then we color $vw$ with $8$, and $uv$ with $5$. Thus $\{a,b\}=\{8,9\}$. Let us now interchange the colors of $xv$ and $xu$, and later we color $vw$ with $6$, and $uv$ with $5$.
\end{proof}

\begin{figure}[htb]
\centering  
\subfigure[]{ 
\label{fig:conf7-384}
\begin{tikzpicture}[scale=1]
\node [nod2] at (0,0) (v) [label={[xshift=-.13cm, yshift=-0.05cm]{\scriptsize $v$}}] {};
\node  at (0,0) (v) [label=below:{\scriptsize $7$}] {};	

\node [nod2] at (.75,-1.25) (w) [label=below:{\scriptsize $w$}] {}
	edge node[midway,right,font=\scriptsize] {$5$} (v);		
\node [nod2w] at (-.75,-1.25) (v4) {}
	edge [] node[midway,left,font=\scriptsize] {$4$} (v);
\node [nod2w] at (-1.5,-.25) (v5)   {}
	edge [] node[midway,above,font=\scriptsize] {$3$} (v);
\node [nod2w] at (-1.25,1) (v6)  {}
	edge [] node[midway,above,font=\scriptsize] {$2$} (v);	
\node [nod2] at (1.25,1) (u) [label=right:{\scriptsize $u$}] {}
	edge  [green!70!black]  (v);
\node [nod2w] at (1.5,-.25) (v2)[label=right:{\scriptsize $x$}]  {}
	edge [] node[midway,above,font=\scriptsize] {$6$} (v)
	edge [] node[midway,right,font=\scriptsize] {$8$} (u)
	edge [] node[midway,right,font=\scriptsize] {$a$} (w);
\node [nod2w] at (0,1.5) (v7)   {}
	edge [] node[midway,right,font=\scriptsize] {$1$} (v);
	
\node [nod2w] at (1,2) (v8)   {}
	edge [] node[midway,right,font=\scriptsize] {$9$} (u);

\node [nod2w] at (1.5,-2) (v10)   {}
	edge [] node[midway,right,font=\scriptsize] {$b$} (w);	
						
\end{tikzpicture}}
\subfigure[]{
\label{fig:conf7-38-38}
\begin{tikzpicture}[scale=1]
\node [nod2] at (0,0) (v) [label={[xshift=-.13cm, yshift=-0.05cm]{\scriptsize $v$}}] {};
\node  at (0,0) (v) [label=below:{\scriptsize $7$}] {};	

\node [nod2] at (.75,-1.25) (w) [label=below:{\scriptsize $w$}] {}
	edge node[midway,right,font=\scriptsize] {$5$} (v);		
\node [nod2w] at (-.75,-1.25) (v4) [label=below:{\scriptsize $y$}] {}
	edge [] node[midway,left,font=\scriptsize] {$4$} (v)
	edge [] node[midway,below,font=\scriptsize] {$b$} (w);
\node [nod2w] at (-1.5,-.25) (v5)   {}
	edge [] node[midway,above,font=\scriptsize] {$3$} (v);
\node [nod2w] at (-1.25,1) (v6)  {}
	edge [] node[midway,above,font=\scriptsize] {$2$} (v);	
\node [nod2] at (1.25,1) (u) [label=right:{\scriptsize $u$}] {}
	edge  [green!70!black]  (v);
\node [nod2w] at (1.5,-.25) (v2)[label=right:{\scriptsize $x$}]  {}
	edge [] node[midway,above,font=\scriptsize] {$6$} (v)
	edge [] node[midway,right,font=\scriptsize] {$8$} (u);
\node [nod2w] at (0,1.5) (v7)   {}
	edge [] node[midway,right,font=\scriptsize] {$1$} (v);
	
\node [nod2w] at (1,2) (v8)   {}
	edge [] node[midway,right,font=\scriptsize] {$9$} (u);

\node [nod2w] at (2,-1.75) (v10)   {}
	edge [] node[midway,below,font=\scriptsize] {$a$} (w);							
\end{tikzpicture}}
\subfigure[]{
\label{fig:conf8-283}
\begin{tikzpicture}[scale=1]
\node [nod2] at (0,0) (v) [label={[xshift=.35cm, yshift=-0.67cm]{\scriptsize $8$}}] {};
\node at (0,0) (v) [label=above:{\scriptsize $v$}] {};
\node [nod2] at (1.5,.65) (v1) [label=above:{\scriptsize $u$}] {}
	edge [green!70!black] (v);
\node [nod2w] at (1.5,-.65) (v2) [label=right:{\scriptsize $x$}] {}
	edge [] node[midway,below,font=\scriptsize] {$7$} (v)
	edge [] node[midway,right,font=\scriptsize] {$9$} (v1);	
\node [nod2] at (.65,-1.5) (v3) [label=below:{\scriptsize $w$}] {}
	edge  node[midway,left,font=\scriptsize] {$6$} (v)
	edge [] node[midway,below,font=\scriptsize] {$b$} (v2);		
\node [nod2w] at (-.65,-1.5) (v4) [label=below:{\scriptsize $y$}]  {}
	edge [] node[midway,left,font=\scriptsize] {$5$} (v);
\node [nod2w] at (-1.5,-.65) (v5)   {}
	edge []  node[midway,above,font=\scriptsize] {$4$}(v);
\node [nod2w] at (-1.5,.65) (v6)  {}
	edge [] node[midway,above,font=\scriptsize] {$3$} (v);	

\node [nod2w] at (-.65,1.5) (v7)   {}
	edge [] node[midway,right,font=\scriptsize] {$2$} (v);	
\node [nod2w] at (.65,1.5) (v8)   {}
	edge [] node[midway,right,font=\scriptsize] {$1$} (v);	
	
\node [nod2w] at (1.85,-2) (v9)   {}
	edge [] node[midway,below,font=\scriptsize] {$a$} (v3);					
\end{tikzpicture}}
\subfigure[]{
\label{fig:conf8-28-38}
\begin{tikzpicture}[scale=1]
\node [nod2] at (0,0) (v) [label={[xshift=.35cm, yshift=-0.67cm]{\scriptsize $8$}}] {};
\node at (0,0) (v) [label=above:{\scriptsize $v$}] {};
\node [nod2] at (1.5,.65) (v1) [label=above:{\scriptsize $u$}] {}
	edge [green!70!black] (v);
\node [nod2w] at (1.5,-.65) (v2) [label=right:{\scriptsize $x$}] {}
	edge [] node[midway,below,font=\scriptsize] {$7$} (v)
	edge [] node[midway,right,font=\scriptsize] {$9$} (v1);	
\node [nod2] at (.65,-1.5) (v3) [label=below:{\scriptsize $w$}] {}
	edge  node[midway,left,font=\scriptsize] {$6$} (v);		
\node [nod2w] at (-.65,-1.5) (v4) [label=below:{\scriptsize $y$}]  {}
	edge [] node[midway,left,font=\scriptsize] {$5$} (v)
	edge [] node[midway,below,font=\scriptsize] {$b$} (v3);
\node [nod2w] at (-1.5,-.65) (v5)   {}
	edge []  node[midway,above,font=\scriptsize] {$4$}(v);
\node [nod2w] at (-1.5,.65) (v6)  {}
	edge [] node[midway,above,font=\scriptsize] {$3$} (v);	

\node [nod2w] at (-.65,1.5) (v7)   {}
	edge [] node[midway,right,font=\scriptsize] {$2$} (v);	
\node [nod2w] at (.65,1.5) (v8)   {}
	edge [] node[midway,right,font=\scriptsize] {$1$} (v);	
	
\node [nod2w] at (1.85,-2) (v9)   {}
	edge [] node[midway,below,font=\scriptsize] {$a$} (v3);					
\end{tikzpicture}}
\caption{ }
\end{figure}

\begin{lemma}\label{lem:8-no-2-on3face-and-3-on3face}
Let $v$ be an $8$-vertex. If $v$ is adjacent to a $2$-vertex on a $3$-face, then $v$  is not adjacent to any $3$-vertex on a $3$-face.
\end{lemma}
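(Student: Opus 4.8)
The plan is to argue by reducibility, exactly in the spirit of Lemma~\ref{lem:7-has-no-two-3-on-3face}. Suppose for contradiction that the $8$-vertex $v$ is adjacent to a $2$-vertex $u$ on a $3$-face $f_1$ and to a $3$-vertex $w$ on a $3$-face $f_2$. Write $x$ for the third vertex of $f_1$, so that $N(u)=\{v,x\}$. First I note that $f_1\neq f_2$: otherwise $u$ and $w$ would be adjacent, contradicting Lemma~\ref{lem:uv-10} since $d(u)+d(w)=5<10$. I then split into the two cases drawn in Figures~\ref{fig:conf8-283} and~\ref{fig:conf8-28-38}: either $f_2$ shares the edge $vx$ with $f_1$ (so $w$ is adjacent to both $v$ and $x$), or $f_1$ and $f_2$ meet only at $v$ (so $w$ is adjacent to $v$ and to another neighbour $y$ of $v$).

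Next I would delete the edge $vu$, take a proper $9$-coloring $\varphi$ of $G-vu$ guaranteed by minimality, and uncolor $u$ and $w$; by Remark~\ref{rem:4-vert-later} these two $4^-$-vertices may be colored last, so it suffices to color the single edge $vu$. The seven edges incident to $v$ other than $vu$ carry seven distinct colors, which I normalise to $\{1,\dots,7\}$, and the vertex color $\varphi(v)$ then lies in $\{8,9\}$. The only elements forbidding a color on $vu$ are these seven edge-colors, $\varphi(v)$, and $\varphi(ux)$. Hence $vu$ can be colored outright unless $\{\varphi(v),\varphi(ux)\}=\{8,9\}$; normalising once more (renaming $8$ and $9$ if needed) I assume $\varphi(ux)=9$ and $\varphi(v)=8$, together with $vx=7$, $vw=6$ in the first case and $vw=6$, $vy=5$ in the second, matching the figures.

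From here the work is pure recoloring. The primary move is to recolor $vw$ from $6$ to $9$ and then set $vu=6$; this is legitimate precisely when $9$ does not already appear on the two edges of $w$ other than $vw$. In the adjacent case this move almost always applies, because the edge $xw$ shares the vertex $x$ with $ux=9$ and so cannot itself be colored $9$; the single remaining obstruction, namely that the third edge at $w$ is colored $9$, is cleared by first interchanging the colors of $xv$ and $xu$ — a swap that is always legal, as $u$ is uncolored and $x$ carries each of $7,9$ on a unique incident edge — after which $vx=9$, I recolor $vw$ to $7$ (possible since $xw\neq 7$), and set $vu=6$.

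I expect the main obstacle to be the non-adjacent case, where the edge $wy$ may itself be colored $9$, so that the two edges of $w$ away from $v$ can carry exactly the colors $\{7,9\}$ and both the primary move and the interchange above become unavailable. Here I would instead free the color $8$ rather than the color $9$: recolor the \emph{vertex} $v$ from $8$ to $9$ and then set $vu=8$. This is valid provided no neighbour of $v$ is itself colored $9$; the neighbours that already force a $9$ nearby are prevented from being colored $9$ (for instance $x$ carries $9$ on $ux$, hence $\varphi(x)\neq 9$), and any remaining $9$-colored neighbour of $v$ can be shifted off $9$ by a short Kempe-type exchange before recoloring $v$. I anticipate that this last exchange is the only genuinely delicate point; every other step reduces to checking that a proposed color avoids its finitely many forbidden values.
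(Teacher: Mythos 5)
Your setup and your treatment of the adjacent case are correct and coincide with the paper's own proof: the reduction to $\varphi(ux)=9$, $\varphi(v)=8$, the primary move (recolor $vw$ to $9$, then give $vu$ the color $6$), and the fallback of swapping $xv\leftrightarrow xu$ and recoloring $vw$ to $7$ are exactly the paper's argument, and your observation that $xw\neq 9$ makes the adjacent case easy is the right one. You also correctly isolate the hard subcase of the non-adjacent configuration, namely $\{a,b\}=\{7,9\}$, where $a,b$ are the colors of the two edges at $w$ other than $vw$ (Figure~\ref{fig:conf8-28-38}).

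However, your resolution of that subcase has a genuine gap. You propose to recolor the vertex $v$ from $8$ to $9$ and then color $vu$ with $8$. This requires that no neighbour of $v$ carries vertex color $9$; besides $x$ (and $y$ when $b=9$), the vertex $v$ has four further colored neighbours, and nothing prevents several of them from having vertex color $9$ simultaneously. Your suggestion to shift such a neighbour off $9$ by a ``short Kempe-type exchange'' does not work as stated: a $9$-colored neighbour can itself have degree $8$, hence up to $16$ forbidden colors, so it need not admit any alternative color at all; and Kempe chains on vertex colors are unsound in total colorings, because switching a vertex from $9$ to some color $c$ along a chain can collide with an \emph{edge} colored $c$ incident to that vertex. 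The paper avoids vertex recoloring entirely: in the subcase $\{a,b\}=\{7,9\}$ it swaps the edge colors $yv\leftrightarrow yw$ at the common neighbour $y$ of $v$ and $w$ (and, when $b=7$, additionally swaps $xv\leftrightarrow xu$), which keeps the coloring proper and removes the color $5$ from the edges around $v$; then $vu$ can be colored $5$, since after these swaps no edge at $u$ or at $v$ carries $5$. Replacing your vertex-recoloring step by this pair of edge swaps closes the gap.
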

\begin{proof}
Let $v$ be adjacent to a $2$-vertex $u$ on a $3$-face $f_1$, and assume to the contrary that $v$  is adjacent to a $3$-vertex $w$ on a $3$-face $f_2$. By Lemma \ref{lem:uv-10}, we have $f_1\neq f_2$. Consider a proper $9$-coloring of $G-uv$, and uncolor the vertices $u,w$. We will find an appropriate color only for $uv$ by Remark \ref{rem:4-vert-later}.

First suppose that $f_1$ and $f_2$ are adjacent. We may suppose that the coloring is the one shown in Figure \ref{fig:conf8-283}. Note that $a=9$; otherwise we would color $vw$ with $9$ and $uv$ with   $6$  where we note that $b\neq 9$ as the color of $ux$ is $9$. It then follows that we interchange the colors of $xv$ and $xu$, and we color $vw$ with $7$, $uv$ with $6$ where we note that $b\neq 7$ as the color of $vx$ is $7$.

Next suppose that $f_1$ and $f_2$ are not adjacent. We may suppose that the coloring is the one shown in Figure \ref{fig:conf8-28-38}. Note that
$9\in \{a,b\}$; otherwise we would color $vw$ with $9$ and $uv$ with   $6$. Moreover, $7\in \{a,b\}$; otherwise we would interchange the colors of $xv$ and $xu$, and we color $vw$ with $7$, $uv$ with $6$. Thus we conclude that $\{a,b\}=\{7,9\}$. Let us interchange the colors of $yv$ and $yw$. If $b=9$, then  we color $uv$ with $5$. If $b=7$, then we interchange the colors of $xv$ and $xu$, and we color $uv$ with $5$.
\end{proof}

\begin{lemma}\label{lem:8-v-has-no2-when-3-diamond}
Let $v$ be an $8$-vertex. If $v$ is adjacent to a $3$-vertex on two $3$-faces, then $v$  has no $2$-neighbour.
\end{lemma}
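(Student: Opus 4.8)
The plan is to argue by contradiction inside the minimal counterexample $G$. Suppose the $8$-vertex $v$ carries the stated diamond: a $3$-vertex $u$ adjacent to $v$ lying on two $3$-faces $vua$ and $vub$, so that $a,b$ are the common neighbours of $u$ and $v$; and suppose, for contradiction, that $v$ also has a $2$-neighbour $z$ whose second neighbour is $w$. Since $u$ is a $3$-vertex adjacent to both $a$ and $b$, Lemma~\ref{lem:uv-10} forces $d(a),d(b)\ge 7$, so $a$ and $b$ are high-degree vertices. Exactly as in the previous lemmas, I would delete the edge $vz$, invoke minimality to obtain a proper $9$-coloring $\varphi$ of $G-vz$, and then uncolor the $4^-$-vertices $z$ and $u$; by Remark~\ref{rem:4-vert-later} it then remains only to find a color for the single edge $vz$.

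The first step is a counting reduction to the tight case. The colors forbidden at $vz$ are the seven colors on the edges incident with $v$ (other than $vz$), together with $\varphi(v)$ and $\varphi(zw)$. If any two of these coincide there is an available color and we are done, so we may assume these nine colors are pairwise distinct; in particular $\varphi(zw)$ avoids every edge-color at $v$ and $\varphi(v)$. The central move is to recolor the diamond edge $vu$ — the one edge at $v$ we may manipulate freely, since its other endpoint $u$ is uncolored — giving it the color $\varphi(zw)$; this frees the old color $\varphi(vu)$, which I then assign to $vz$. Because $u$ is uncolored, the only constraints on this recoloring beyond the edges at $v$ and $\varphi(v)$ come from the two edges $ua$ and $ub$, so the move succeeds immediately unless $\varphi(zw)\in\{\varphi(ua),\varphi(ub)\}$.

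This leaves the obstruction $\varphi(zw)=\varphi(ua)$ (the case $\varphi(ub)$ being symmetric), and I would resolve it by exploiting the two triangles through a color interchange at a common neighbour, in the spirit of Lemmas~\ref{lem:7-has-no-two-3-on-3face} and~\ref{lem:8-no-2-on3face-and-3-on3face}. Swapping $\varphi(va)$ and $\varphi(ua)$ at $a$ relocates the color $\varphi(zw)$ off the edge $ua$ and onto $va$; this removes the old color $\varphi(va)$ from the edges at $v$, so that $\varphi(va)$ becomes available for $vz$. Throughout, $u$ is kept uncolored so that $\varphi(u)$ imposes no constraint, which is what makes the swap legitimate at $u$.

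The main obstacle is that this interchange can be blocked by a coincidence of colors across the two triangles: precisely when $\varphi(ub)=\varphi(va)$, moving $\varphi(va)$ onto $ua$ clashes with $ub$ at $u$, and by the same token the symmetric swap at $b$ is inadmissible as well. I expect this single coincidence to be the genuinely delicate point of the proof. To dispose of it I would recolor one of the edges $ua,ub$ at its high-degree endpoint (recall $d(a),d(b)\ge 7$), leveraging the extra room available there, possibly followed by a further interchange, and then finish by the primary move; the bookkeeping for these residual configurations, organized as a short case analysis like those driving the earlier lemmas, is where the real work lies, the remainder being the same counting-and-interchange scheme used above.
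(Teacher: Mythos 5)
Your setup and first two steps track the paper's own proof exactly: delete $vz$, obtain a $9$-coloring of $G-vz$ by minimality, uncolor $z$ and $u$, reduce to the tight case where $\varphi(zw)$ misses every color at $v$, recolor $vu$ with $\varphi(zw)$ unless $\varphi(zw)\in\{\varphi(ua),\varphi(ub)\}$, and otherwise (say $\varphi(ua)=\varphi(zw)$) swap $\varphi(va)\leftrightarrow\varphi(ua)$, which frees $\varphi(va)$ for $vz$ unless $\varphi(ub)=\varphi(va)$. But at that final coincidence --- which you yourself flag as ``the genuinely delicate point'' --- your argument stops. The plan to ``recolor one of the edges $ua,ub$ at its high-degree endpoint, leveraging the extra room available there'' is not a proof, and in fact there is no guaranteed room: Lemma~\ref{lem:uv-10} only gives $d(a)\geq 7$, and $a$ may be an $8$-vertex, in which case a recoloring of $ua$ faces up to ten constraints (seven other edge colors at $a$, the vertex color $\varphi(a)$, and the colors $\varphi(uv)$, $\varphi(ub)$ at $u$), so no color need be available. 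The one case you defer is thus exactly the part of the lemma left unproven.

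The missing idea is short, and it is what the paper does (in its notation, the case $a=9$, $b=7$): perform the two interchanges \emph{simultaneously}, swapping $\varphi(va)\leftrightarrow\varphi(ua)$ and $\varphi(vb)\leftrightarrow\varphi(ub)$ at once. The coincidence $\varphi(ub)=\varphi(va)$ that blocks each swap separately is precisely what makes the joint swap proper: at $u$ the two triangle edges now carry the old $\varphi(va)$ and the old $\varphi(vb)$, which are distinct, while at $v$ the edge colors are the old ones with $\varphi(va)$ replaced by $\varphi(zw)$ and $\varphi(vb)$ replaced by $\varphi(va)$, again pairwise distinct (here one uses that $\varphi(zw)$ was missing at $v$). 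This frees the color $\varphi(vb)$ at $v$, and since $\varphi(vb)\neq\varphi(zw)$ by your tight-case assumption, that color can be given to $vz$, completing the proof. With that paragraph added, your argument coincides with the paper's; without it, the lemma is not established.
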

\begin{proof}

Assume that $v$ is adjacent to a $3$-vertex $w$ on two $3$-faces, and $v$  has a $2$-neighbour $u$. 
Consider a proper $9$-coloring of $G-uv$, and uncolor the vertices $u,w$. We will find an appropriate color  only for $uv$ by Remark \ref{rem:4-vert-later}.

We may suppose that the coloring is the one shown in Figure \ref{fig:conf8-2-838}. Notice first that 
$9\in \{a,b\}$; otherwise we would color $vw$ with $9$ and $uv$ with   $6$.
We may assume without loss of generality that $a=9$. If $b\neq 7$, then we interchange the colors of $xv$ and $xw$, and we color $uv$ with $7$. If $b=7$, then we interchange the colors of $xv$ and $xw$ as well as the colors of $yv$ and $yw$, and then  we color $uv$ with $5$.
\end{proof}

\begin{figure}[htb]
\centering  
\subfigure[]{ 
\label{fig:conf8-2-838}
\begin{tikzpicture}[scale=1]
\node [nod2] at (0,0) (v) [label={[xshift=.35cm, yshift=-0.65cm]{\scriptsize $8$}}] {};
\node at (0,0) (v) [label=above:{\scriptsize $v$}] {};
\node [nod2] at (1.5,.65) (v1) [label=above:{\scriptsize $u$}] {}
	edge [green!70!black] (v);

\node [nod2w] at (2.5,.65) (v11)   {}
	edge [] node[midway,below,font=\scriptsize] {$9$} (v1);		
	
\node [nod2w] at (1.5,-.65) (v2) [label=right:{\scriptsize $x$}] {}
	edge [] node[midway,below,font=\scriptsize] {$7$} (v);
\node [nod2] at (.65,-1.5) (v3) [label=below:{\scriptsize $w$}] {}
	edge  node[midway,left,font=\scriptsize] {$6$} (v)
	edge node[midway,below,font=\scriptsize] {$a$}  (v2);		
\node [nod2w] at (-.65,-1.5) (v4) [label=below:{\scriptsize $y$}]  {}
	edge [] node[midway,left,font=\scriptsize] {$5$} (v)
	edge [] node[midway,below,font=\scriptsize] {$b$} (v3);
\node [nod2w] at (-1.5,-.65) (v5)   {}
	edge []  node[midway,above,font=\scriptsize] {$4$}(v);
\node [nod2w] at (-1.5,.65) (v6)  {}
	edge [] node[midway,above,font=\scriptsize] {$3$} (v);	

\node [nod2w] at (-.65,1.5) (v7)   {}
	edge [] node[midway,right,font=\scriptsize] {$2$} (v);	
\node [nod2w] at (.65,1.5) (v8)   {}
	edge [] node[midway,right,font=\scriptsize] {$1$} (v);					
\end{tikzpicture}}
\subfigure[]{ 
\label{fig:conf8-8388-8383}
\begin{tikzpicture}[scale=1]
\node [nod2] at (0,0) (v) [label={[xshift=.35cm, yshift=-0.65cm]{\scriptsize $8$}}] {};
\node at (0,0) (v) [label=above:{\scriptsize $v$}] {};
\node [nod2] at (1.5,.65) (v1) [label=right:{\scriptsize $u$}] {}
	edge node[midway,below,font=\scriptsize]  {$6$} (v);

\node [nod2w] at (1.5,-.65) (v2) [label=right:{\scriptsize $y$}] {}
	edge [] node[midway,below,font=\scriptsize] {$7$} (v)
	edge [] node[midway,right,font=\scriptsize] {$b$} (v1);	
\node [nod2] at (.65,-1.5) (v3) [label=below:{\scriptsize $z$}] {}
	edge   [green!70!black](v)
	edge [] node[midway,left,font=\scriptsize] {$c$} (v2);		
\node [nod2w] at (-.65,-1.5) (v4) [label=below:{\scriptsize $s$}]  {}
	edge [] node[midway,left,font=\scriptsize] {$1$} (v);
\node [nod2] at (-1.5,-.65) (v5) [label=below:{\scriptsize $w$}]  {}
	edge []  node[midway,above,font=\scriptsize] {$2$}(v)
	edge []  node[midway,below,font=\scriptsize] {$e$}(v4);
\node [nod2w] at (-1.5,.65) (v6) [label=left:{\scriptsize $p$}] {}
	edge [] node[midway,above,font=\scriptsize] {$3$} (v)
	edge [] node[midway,left,font=\scriptsize] {$f$} (v5);	

\node [nod2w] at (-.65,1.5) (v7) [label=above:{\scriptsize $t$}]  {}
	edge [] node[midway,right,font=\scriptsize] {$4$} (v)
	edge [] node[midway,left,font=\scriptsize] {$g$} (v6);	
\node [nod2w] at (.65,1.5) (v8)  [label=above:{\scriptsize $x$}] {}
	edge [] node[midway,right,font=\scriptsize] {$5$} (v)
	edge [] node[midway,right,font=\scriptsize] {$a$} (v1);	
\node [nod2w] at (1.7,-2) (v9)  [label=right:{\scriptsize $r$}] {}
	edge [] node[midway,below,font=\scriptsize] {$d$} (v3);								
\end{tikzpicture}}
\caption{ }
\end{figure}

\begin{lemma}\label{lem:8-v-has-no-two-3-on-3-faces}
Let $v$ be an $8$-vertex. Suppose that $v$ is incident to a pair of three consecutive $3$-faces. If there exist two $3$-neighbours $u,w$ of $v$ such that each of them is adjacent to $v$ on two $3$-faces, then $v$  is not adjacent to any $3$-vertex $z$ with $z\notin \{u,w\}$. 
\end{lemma}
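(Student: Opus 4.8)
The plan is to follow the template of Lemmas~\ref{lem:8-no-2-on3face-and-3-on3face}--\ref{lem:8-v-has-no2-when-3-diamond}. Assume for contradiction that $v$ has a third $3$-neighbour $z\notin\{u,w\}$. The two triples of consecutive $3$-faces span four consecutive neighbours of $v$ each and, since $G$ has no $4$-fan, they cannot merge; as $v$ has exactly eight neighbours they must exhaust them, so every neighbour of $v$ lies on a $3$-face at $v$ and in particular $z$ does. After relabelling I adopt the local configuration and the colour names of Figure~\ref{fig:conf8-8388-8383}: the seven coloured edges at $v$ carry $1,\dots,7$, the two diamonds give $ux=a,\ uy=b$ and $ws=e,\ wp=f$, while $z$ contributes $zy=c,\ zr=d$. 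By Lemma~\ref{lem:uv-10} the two $3$-faces at each of $u,w$ are distinct, so the picture is consistent. I take a proper $9$-colouring of $G-vz$, uncolour the $3$-vertices $u,w,z$ (to be coloured last by Remark~\ref{rem:4-vert-later}), and try to colour only the edge $vz$.

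My first step is a clean reduction. The seven coloured edges at $v$ use precisely $\{1,\dots,7\}$, so $\varphi(v)\in\{8,9\}$, and since neither $8$ nor $9$ occurs on an edge at $v$, I am free to recolour $v$ to whichever of $8,9$ I please. Hence $vz$ receives a colour of $\{8,9\}$ unless both are forbidden at $z$, that is unless $\{c,d\}=\{8,9\}$. I may therefore assume $\{c,d\}=\{8,9\}$; then every colour is forbidden for $vz$ as it stands, and the whole task reduces to \emph{liberating one colour of $\{1,\dots,7\}$ at $v$}. I write $h$ for the element of $\{8,9\}$ distinct from the current $\varphi(v)$.

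Next I catalogue the cheap moves. Because $u,w$ are uncoloured I may recolour $vu$ or $vw$ to $h$; and at each white vertex $x,y,s,p$ I may interchange the colour of its edge to $v$ with the colour of its edge to the incident uncoloured vertex (at $y$ there are two such options, through $u$ and through $z$), a swap that needs to be checked only at the two far endpoints, the white vertex keeping both colours. A direct computation shows that one of these single moves frees a colour of $\{1,\dots,7\}$ for $vz$ except in a short list of extremal colour patterns: up to the symmetry exchanging the two diamonds, $\{a,b\}\in\{\{h,5\},\{7,h\}\}$ while simultaneously $\{e,f\}\in\{\{h,1\},\{3,h\}\}$.

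Finally I clear these residual patterns by two-step recolourings that exploit two features the hypotheses guarantee: $y$ is incident to the two uncoloured vertices $u$ and $z$, and $v$ may still be toggled between $8$ and $9$. For instance, when $c=h$ I first interchange $vy$ with $zy$, which frees $7$ at $v$ while pushing it onto $z$; recolouring $vu$ to $7$ then frees $6$, and $vz$ takes $6$. When instead $zy=\varphi(v)$ and $a=h$, I interchange $uy$ with $zy$ and recolour $v$ to $h$, after which $vz$ may take the colour previously at $v$. In the tightest pattern $\{a,b\}=\{7,h\}$, I recolour $v$ to $h$ first, which changes the unique available high colour and unblocks recolouring $vu$, freeing $6$ for $vz$; the patterns coming from the $w$-diamond are handled symmetrically. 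In every case the colouring extends to $vz$, contradicting the minimality of $G$. I expect the genuine difficulty to be organisational rather than conceptual: keeping the numerous extremal patterns apart and exhibiting, for each, a swap sequence whose every step is legal. The presence of \emph{two} diamonds at $v$, together with the vertex $y$ shared by $u$ and $z$ and the two interchangeable colours $8,9$ at $v$, is precisely what supplies enough independent moves to finish the packed cases, which is why both $u$ and $w$ are required to sit on two $3$-faces.
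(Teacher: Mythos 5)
Your setup agrees with the paper's: both take a proper $9$-coloring of $G-vz$, uncolor $u,w,z$, and reduce (via Lemma \ref{lem:uv-10} and the absence of a $4$-fan) to the configuration of Figure \ref{fig:conf8-8388-8383}. The fatal problem is your ``clean reduction.'' You claim that because no edge at $v$ carries $8$ or $9$, you may freely recolor the \emph{vertex} $v$ to either of $8,9$. That is false: in a total coloring adjacent vertices must also receive distinct colors, and $v$ has five \emph{colored} neighbours ($x,y,s,p,t$ in the figure's notation), any of which may carry the color $8$ or $9$. (A global exchange of the names $8$ and $9$ does not rescue the move: it yields an isomorphic coloring of the same configuration, not a second coloring that can serve as a ``toggle.'') Consequently, after fixing $\varphi(v)=8$ by a harmless renaming, the only legitimate conclusion is $9\in\{c,d\}$ --- exactly the paper's starting point --- and your reduction to $\{c,d\}=\{8,9\}$ is unjustified. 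The cases in which the other of $c,d$ lies in $\{1,\dots,7\}$, i.e.\ the subcases $d\in\{1,2,3,4\}$, $d\in\{5,6,7,8\}$, $c\in\{1,2,3,4\}$, $c\in\{5,6,7,8\}$ that occupy essentially the whole of the paper's argument, are entirely absent from your proof.

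The same illegal move contaminates the endgame you do present: ``recolour $v$ to $h$ first'' and ``$v$ may still be toggled between $8$ and $9$'' are not available operations, so even your packed patterns with $\{c,d\}=\{8,9\}$ are not settled. There are further unverified legality conditions in the swap sequences: for instance, after interchanging $vy$ with $zy$, recoloring $vu$ to $7$ requires $7\notin\{a,b\}$, and $a=7$ is not excluded; similarly, your ``short list of extremal patterns'' is asserted rather than derived. Note that the paper's proof never recolors the vertex $v$ at all: it only interchanges the two edge colors at a white vertex incident with an uncolored $3$-vertex (checking legality at the two far endpoints) and recolors the edges $vu$, $vw$, organizing these moves into a case analysis on which of $c,d$ equals $9$ and on the value of the other one. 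To repair your argument you would have to abandon the toggle and carry out a case analysis of that kind.
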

\begin{proof}

Suppose that $v$ is adjacent to a $3$-vertex $u$ on two $3$-faces, and a $3$-vertex $w$ on two $3$-faces. Assume for a contradiction that $v$  is adjacent to a $3$-vertex $z$ with $z\notin \{u,w\}$.  
Consider a proper $9$-coloring of $G-vz$, and uncolor the vertices $u,w,z$. We will find an appropriate color only for $vz$ by Remark \ref{rem:4-vert-later}.

Using the facts that $G$ has no $4$-fan, that $z$ is not adjacent to $u$ or $w$ by Lemma \ref{lem:uv-10} and that $v$ is incident to a pair of three consecutive $3$-faces, we deduce that $z$ has two common neighbours with $u$ or $w$. By symmetry, we assume that  $z$ has two common neighbours with $u$. Thus, 
we may suppose that the coloring is the one shown in Figure \ref{fig:conf8-8388-8383}. Observe that $9\in \{c,d\}$; otherwise $vz$  would have an available color.  

Suppose first that $c=9$. If $d\in \{1,2,3,4\}$ then  $9\in \{a,b\}$; otherwise we would color $vu$ with $9$ and $vz$ with $6$. It follows that $a=9$ as $c=9$. Therefore we interchange the colors of $xu$ and $xv$, and we color $vz$ with $5$. 
If $d\in \{5,6,7,8\}$ then $9\in \{e,f\}$; otherwise we would color $vw$ with $9$ and $vz$ with $2$. It then follows that $f=9$; otherwise we would interchange the colors of $sw$ and $sv$, and we color $vz$ with $1$. Therefore we interchange the colors of $pv$ and $pw$, and we color $vz$ with $3$. 

Suppose next that $d=9$. 
If $c\in \{1,2,3,4\}$ then $9\in \{a,b\}$; otherwise we would color $uv$ with $9$ and $vz$ with $6$. It follows that  $b=9$; otherwise we would interchange the colors of $xv$ and $xu$, and we color $vz$ with $5$. Thus, we interchange the colors of $yv$ and $yu$, and we color $vz$ with $7$.
If $c\in \{5,6,7,8\}$ then $9\in \{e,f\}$; otherwise we would color $vw$ with $9$ and $vz$ with $2$. It follows that  $f=9$; otherwise  we would interchange the colors of $sw$ and $sv$, and we color $vz$ with $1$. Therefore we interchange the colors of $pv$ and $pw$, and we color $vz$ with $3$. 
\end{proof}

\begin{figure}[htb]
\centering   
\begin{tikzpicture}[scale=1]
\node [nod2] at (0,0) (v) [label={[xshift=.35cm, yshift=-0.65cm]{\scriptsize $8$}}] {};
\node at (0,0) (v) [label=above:{\scriptsize $v$}] {};
\node [nod2w] at (1.5,.65) (v1) [label=right:{\scriptsize $v_2$}] {}
	edge [] node[midway,below,font=\scriptsize] {$3$}(v);

\node [nod2] at (1.5,-.65) (v2) [label=right:{\scriptsize $v_3$}] {}
	edge [] node[midway,below,font=\scriptsize] {$4$} (v)
	edge [] node[midway,right,font=\scriptsize] {$c_3$} (v1);	
\node [nod2] at (.65,-1.5) (v3) [label={[xshift=.1cm, yshift=-0.6cm]{\scriptsize $v_4$}}] {}
	edge node[midway,left,font=\scriptsize] {$5$}  (v);		
\node [nod2] at (-.65,-1.5) (v4)   {}
	edge  [] node[midway,left,font=\scriptsize] {$6$} (v);
\node [nod2] at (-1.5,-.65) (v5) [label=left:{\scriptsize $v_{i\text{-}1}$}]  {}
	edge [] node[midway,above,font=\scriptsize] {$7$} (v);
\node [nod2] at (-1.5,.65) (v6) [label=above:{\scriptsize $v_i$}] {}
	edge [green!70!black] (v);	

\node [nod2w] at (-.65,1.5) (v7)   {}
	edge [] node[midway,right,font=\scriptsize] {$1$} (v);	
\node [nod2] at (.65,1.5) (v8) [label=left:{\scriptsize $v_1$}] {}
	edge [] node[midway,right,font=\scriptsize] {$2$} (v)
	edge [] node[midway,right,font=\scriptsize] {$c_2$} (v1);		
	
\node [nod2w] at (0,-2.7) (v9) [label=below:{\scriptsize $p_2$}]  {}
	edge [] node[midway,below,font=\scriptsize] { \  $\ c_6$} (v3)
	edge [dotted]  (v4);		

\node [nod2w] at (2,-2) (v9) [label=right:{\scriptsize $p_1$}]  {}
	edge [] node[midway,below,font=\scriptsize] {$c_5$} (v3)	
	edge [] node[midway,right,font=\scriptsize] {$c_4$} (v2);		

\node [nod2w] at (-2,-2) (v9) [label=below:{\scriptsize $p_{i\text{-}4}$}]  {}
	edge [dotted]  (v4)
	edge [] node[midway,left,font=\scriptsize] {$c_{i}$} (v5);

\node [nod2w] at (-3.3,0) (v9) [label=left:{\scriptsize $p_{i\text{-}3}$}]  {}
	edge [] node[midway,left,font=\scriptsize] {$c_{i+1}$} (v5)
	edge [] node[midway,above,font=\scriptsize] {$9$} (v6);
		
\node [nod2w] at (1,2.5) (v9)  {}
	edge [] node[midway,left,font=\scriptsize] {$c_1$} (v8);						
\end{tikzpicture}
\caption{ }
\label{fig:conf8-383--3--3--3--2}
\end{figure}

\begin{lemma}\label{lem:8-has-no-233383}
Let $v$ be an $8$-vertex and denote by $v_1,v_2\ldots,v_8$ its neighbours with a cyclic orientation such that exactly one $v_i$, for some $4\leq i \leq 7$, is a $2$-vertex. Suppose that $v_1$ (resp.~$v_3$) is contained in a $3$-face $f_1$ (resp.~$f_2$) such that $f_1$ and $f_2$ are adjacent.  If  $v_1,v_{3},v_4,\ldots,v_{i-1}$ are $3$-vertices, then at least one of the edges $vv_j$, for some $3\leq j \leq i-1$, is contained in a $5^+$-face.
\end{lemma}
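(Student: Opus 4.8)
The plan is to argue by reducibility: assuming the conclusion fails, I first pin down the local structure completely, and then show that the resulting configuration cannot occur in the minimal counterexample $G$.

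\textbf{Structural step.} Suppose for contradiction that none of $vv_3,\dots,vv_{i-1}$ lies on a $5^+$-face, so every face of $G$ in the angular sectors of $v$ between $vv_2$ and $vv_i$ is a $4^-$-face. The two hypothesised $3$-faces must be $f_1=vv_1v_2$ and $f_2=vv_2v_3$ (sharing $vv_2$). For $3\le j\le i-1$, the face $g_j$ in the sector between $vv_j$ and $vv_{j+1}$ is a triangle only if $v_jv_{j+1}\in E(G)$; but $v_j$ is a $3$-vertex and $v_{j+1}$ is either a $3$-vertex or, for $j=i-1$, the $2$-vertex $v_i$, so $d(v_j)+d(v_{j+1})\le 6<10$, contradicting Lemma~\ref{lem:uv-10}. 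Hence each $g_j$ is a $4$-face $vv_jp_{j-2}v_{j+1}$, and together with $f_1,f_2$ this forces exactly the configuration of Figure~\ref{fig:conf8-383--3--3--3--2}. It therefore suffices to prove that configuration is reducible.

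\textbf{Colouring setup.} Delete $vv_i$; by minimality $G-vv_i$ has a proper total $9$-colouring, and I uncolour the $4^-$-vertices $v_1,v_3,\dots,v_{i-1},v_i$, which may be recoloured last by Remark~\ref{rem:4-vert-later}, so it remains only to colour $vv_i$. The seven edges $vv_j$ ($j\ne i$) keep their colours, which I relabel as $\{1,\dots,7\}$ as in the figure; thus $v$ is coloured $8$ or $9$. Up to interchanging the names of colours $8$ and $9$ I may assume $v$ is coloured $8$, and then, unless $vv_i$ is directly colourable, $v_ip_{i-3}$ is coloured $9$ (otherwise $8$ or $9$ is free at $vv_i$). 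Hence all nine colours are forbidden at $vv_i$.

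\textbf{Escape moves.} There are two families of single-step recolourings. \emph{Move A}: recolour $v$ with $9$ and $vv_i$ with $8$; this is valid unless $9$ is the colour of one of the still-coloured neighbours $v_2,v_{i+1},\dots,v_8$ of $v$. \emph{Move B$_j$}: for a $3$-vertex $v_j$, recolour $vv_j$ with $9$ and give $vv_i$ the freed colour; this is valid unless $9$ already occurs on one of the two outer edges of $v_j$. If any move applies we are done, so all are blocked: $9$ is the colour of some neighbour of $v$, and $9$ lies on an outer edge of every $3$-vertex $v_1,v_3,\dots,v_{i-1}$. Since $v_ip_{i-3}=9$ and $v_{i-1}p_{i-3}$ is incident to it, $v_{i-1}p_{i-3}\ne 9$, so the block at $v_{i-1}$ forces $v_{i-1}p_{i-4}=9$; this non-$9$ edge at the $2$-vertex end is the lever of the argument.

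\textbf{Main obstacle.} The hard part is turning these forced placements of colour $9$ into a contradiction, since single recolourings no longer suffice. I would combine them with Kempe-type swaps at the uncoloured $3$-vertices (interchanging the two edge-colours at a $v_j$, in the spirit of the swaps in Lemmas~\ref{lem:7-has-no-two-3-on-3face}--\ref{lem:8-v-has-no-two-3-on-3-faces}) and propagate the recolouring backwards along the chain of $4$-faces $g_{i-1},g_{i-2},\dots$, starting from the anchor $v_{i-1}p_{i-3}\ne 9$. The genuine difficulty is that the shared vertices $p_{j-2}$ may have high degree and saturated palettes, so each swap must be verified to remain proper at those vertices; I expect the argument to push the ``defect'' one $4$-face at a time until a colour is freed at $vv_i$, with the finiteness of the fan (terminating at $f_2$) guaranteeing success and hence the desired contradiction with the minimality of $G$.
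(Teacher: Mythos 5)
Your structural step and colouring setup coincide with the paper's: each edge $vv_j$ ($3\le j\le i-1$) lies on a $4$-face (it cannot lie on a $3$-face by Lemma~\ref{lem:uv-10}), which yields exactly the configuration of Figure~\ref{fig:conf8-383--3--3--3--2}; after deleting $vv_i$ and uncolouring the $4^-$-vertices, one may normalise so that the edges at $v$ use colours $1,\dots,7$, the vertex $v$ uses $8$, and $v_ip_{i-3}$ is coloured $9$. Your moves $B_j$ are the paper's first forcing step, and your observation that $v_ip_{i-3}=9$ forces $v_{i-1}p_{i-3}\neq 9$ and hence $v_{i-1}p_{i-4}=9$ is the correct anchor. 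But from this point on the proposal is not a proof: the ``Main obstacle'' paragraph merely expresses the expectation that some sequence of Kempe-type swaps will push the defect along the fan, without specifying the swaps, the colours involved, or the final free colour at $vv_i$. That unresolved part is precisely the substance of the lemma, so there is a genuine gap.

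What is missing, concretely, is a two-round forcing argument followed by an explicit path recolouring. First, the anchor propagates through \emph{every} $3$-vertex: consecutive outer edges of the $3$-vertices meet at the shared vertices $v_2,p_1,p_2,\dots$, so the blocking condition zips backwards and forces the $v_2$-side outer edge of each $3$-vertex to be $9$, i.e.\ $c_1=c_3=c_5=\cdots=c_i=9$. Second --- and this is the key idea absent from your proposal --- the colour $3$ of the edge $vv_2$ shared by the two triangles enters the game: if some $v_i$-side outer edge $c_4,c_6,\dots$ differed from $3$, one could interchange the colours of $vv_2$ and $v_2v_3$ (and then successively the pairs $p_1v_3/p_1v_4$, $p_2v_4/p_2v_5$, \dots) so that $vv_2$ receives $9$, after which $vv_i$ can be coloured $3$; hence $c_4=c_6=\cdots=c_{i+1}=3$. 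Now the coloured edges of the path $vv_2v_3p_1v_4p_2\cdots v_{i-1}p_{i-3}v_i$ alternate $3,9,3,9,\dots,3,9$, so interchanging $3$ and $9$ along the whole path is proper; one then recolours $vv_1$ with $3$ (possible since $c_1=9$ and $c_2\neq 3$) and colours $vv_i$ with $2$. Note finally that the difficulty you single out --- properness at the possibly high-degree vertices $p_j$ --- is illusory: each swap exchanges the colours of two edges incident to the same vertex $p_j$, so the palette at $p_j$ is unchanged; the real constraints sit at the \emph{other} endpoints $v_3,v_4,\dots$, and it is exactly to satisfy those that the second forcing round with the colour $3$ is needed.
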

\begin{proof}
Suppose that  $v_1$ (resp.~$v_3$) is contained in a $3$-face $f_1$ (resp.~$f_2$) such that $f_1$ and $f_2$ are adjacent. Let $v_i$, for $4\leq i \leq 7$, be a $2$-vertex, and let the vertices $v_4,v_{5},\ldots,v_{i-1}$ be $3$-vertices. Assume to the contrary that each edge $vv_j$, for $3\leq j \leq i-1$, is contained in a $4$-face, where we note that those faces cannot be $3$-faces by Lemma \ref{lem:uv-10}.

Consider a proper $9$-coloring of $G-vv_i$, and uncolor the vertices $v_1,v_3,v_4,\ldots, v_i$. We will find an appropriate color only for $vv_i$ by Remark \ref{rem:4-vert-later}.
We may suppose that the coloring is the one shown in Figure \ref{fig:conf8-383--3--3--3--2}. Note that $9\in\{c_1,c_2\}$; otherwise we would color $vv_1$ with $9$ and $vv_i$ with $2$. Similarly,  $9\in\{c_2,c_3\}$, $9\in\{c_3,c_4\}$, $9\in\{c_5,c_6\}$, and  $9\in\{c_i,c_{i+1}\}$. This implies that $c_1=c_3=c_5=\ldots=c_i=9$ as the color of $v_ip_{i-3}$ is $9$.
If $c_4\neq 3$ then we  interchange the colors of $v_2v$ and $v_2v_3$, and we color $vv_i$ with $3$. Thus we may assume that $c_4=3$. If $c_6\neq 3$ then we  interchange the colors of $v_2v$ and $v_2v_3$ as well as  the colors of $p_1v_3$ and $p_1v_4$, and we color $vv_i$ with $3$. Hence $c_6=3$. 
Continuing this process, we eventually obtain that $c_{i+1}=3$.
Now we can alternate the color of the edges of the path $vv_2v_3p_1v_4p_2\ldots v_{i-1}p_{i-3}v_i$. We color $vv_1$ with $3$ and $vv_i$ with $2$, where we note that $c_2\neq 3$ as the color of $vv_2$ is $3$.
\end{proof}

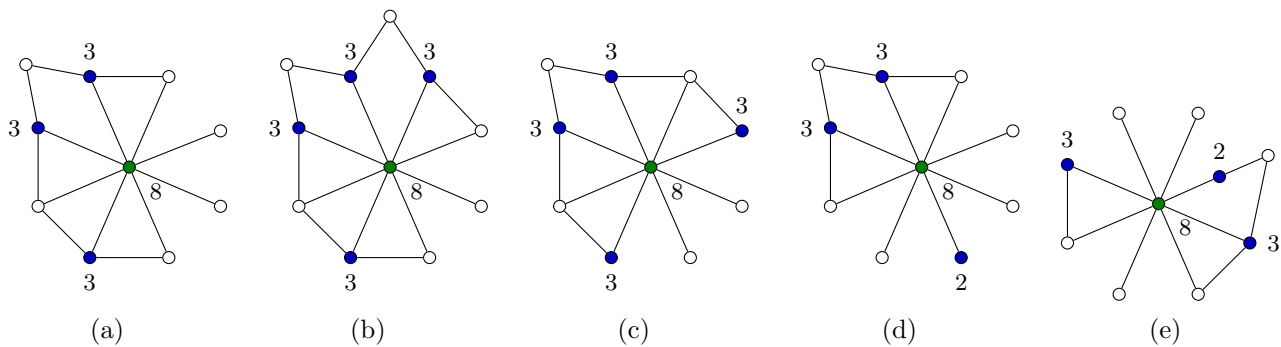
\begin{figure}[htb]
\centering  
\subfigure[]{
\label{fig:8-8383--38}
\begin{tikzpicture}[scale=.8]
\node [nodr] at (0,0) (v) [label={[xshift=.35cm, yshift=-0.65cm]{\scriptsize $8$}}] {};
\node [nod2w] at (1.5,.6) (v1)  {}
	edge [] (v);
\node [nod2w] at (1.5,-.65) (v2) {}
	edge [] (v);
\node [nod2w] at (.65,-1.5) (v3)  {}
	edge  (v);		
\node [nod2] at (-.65,-1.5) (v4)  [label=below:{\scriptsize $3$}] {}
	edge [] (v)
	edge [] (v3);
\node [nod2w] at (-1.5,-.65) (v5)   {}
	edge [] (v)
	edge [] (v4);
\node [nod2] at (-1.5,.65) (v6)  [label=left:{\scriptsize $3$}] {}
	edge [] (v)
	edge [] (v5);	
\node [nod2] at (-.65,1.5) (v7)  [label=above:{\scriptsize $3$}] {}
	edge [] (v);	
\node [nod2w] at (.65,1.5) (v8)    {}
	edge [] (v)	
	edge [] (v7);
\node [nod2w] at (-1.7,1.7) (v11)   {}
	edge [] (v6)
	edge [] (v7);											
\end{tikzpicture} }
\subfigure[]{
\label{fig:8-8383--3--38}
\begin{tikzpicture}[scale=.8]
\node [nodr] at (0,0) (v) [label={[xshift=.35cm, yshift=-0.65cm]{\scriptsize $8$}}] {};
\node [nod2w] at (1.5,.6) (v1)  {}
	edge [] (v);
\node [nod2w] at (1.5,-.65) (v2) {}
	edge [] (v);
\node [nod2w] at (.65,-1.5) (v3)  {}
	edge  (v);		
\node [nod2] at (-.65,-1.5) (v4)  [label=below:{\scriptsize $3$}] {}
	edge [] (v)
	edge [] (v3);
\node [nod2w] at (-1.5,-.65) (v5)   {}
	edge [] (v)
	edge [] (v4);
\node [nod2] at (-1.5,.65) (v6)  [label=left:{\scriptsize $3$}] {}
	edge [] (v)
	edge [] (v5);	
\node [nod2] at (-.65,1.5) (v7)  [label=above:{\scriptsize $3$}] {}
	edge [] (v);	
\node [nod2] at (.65,1.5) (v8)  [label=above:{\scriptsize $3$}]   {}
	edge [] (v)	
	edge [] (v1);
\node [nod2w] at (-1.7,1.7) (v11)   {}
	edge [] (v6)
	edge [] (v7);	
\node [nod2w] at (0,2.5) (v12)    {}
	edge [] (v8)	
	edge [] (v7);											
\end{tikzpicture} }
\subfigure[]{
\label{fig:8-383--383}
\begin{tikzpicture}[scale=.8]
\node [nodr] at (0,0) (v) [label={[xshift=.35cm, yshift=-0.65cm]{\scriptsize $8$}}] {};
\node [nod2] at (1.5,.6) (v1)  [label=above:{\scriptsize $3$}] {}
	edge [] (v);
\node [nod2w] at (1.5,-.65) (v2) {}
	edge [] (v);
\node [nod2w] at (.65,-1.5) (v3)  {}
	edge  (v);		
\node [nod2] at (-.65,-1.5) (v4)  [label=below:{\scriptsize $3$}] {}
	edge [] (v);
\node [nod2w] at (-1.5,-.65) (v5)   {}
	edge [] (v)
	edge [] (v4);
\node [nod2] at (-1.5,.65) (v6)  [label=left:{\scriptsize $3$}] {}
	edge [] (v)
	edge [] (v5);	
\node [nod2] at (-.65,1.5) (v7)  [label=above:{\scriptsize $3$}] {}
	edge [] (v);	
\node [nod2w] at (.65,1.5) (v8)    {}
	edge [] (v)	
	edge [] (v7)
	edge [] (v1);
\node [nod2w] at (-1.7,1.7) (v11)   {}
	edge [] (v6)
	edge [] (v7);											
\end{tikzpicture} }
\subfigure[]{
\label{fig:8-83--38--2}
\begin{tikzpicture}[scale=.8]
\node [nodr] at (0,0) (v) [label={[xshift=.35cm, yshift=-0.65cm]{\scriptsize $8$}}] {};
\node [nod2w] at (1.5,.6) (v1)   {}
	edge [] (v);
\node [nod2w] at (1.5,-.65) (v2) {}
	edge [] (v);
\node [nod2] at (.65,-1.5) (v3) [label=below:{\scriptsize $2$}]  {}
	edge  (v);		
\node [nod2w] at (-.65,-1.5) (v4)  {}
	edge [] (v);
\node [nod2w] at (-1.5,-.65) (v5)   {}
	edge [] (v);
\node [nod2] at (-1.5,.65) (v6)  [label=left:{\scriptsize $3$}] {}
	edge [] (v)
	edge [] (v5);	
\node [nod2] at (-.65,1.5) (v7)  [label=above:{\scriptsize $3$}] {}
	edge [] (v);	
\node [nod2w] at (.65,1.5) (v8)    {}
	edge [] (v)	
	edge [] (v7);
\node [nod2w] at (-1.7,1.7) (v11)   {}
	edge [] (v6)
	edge [] (v7);											
\end{tikzpicture} }
\subfigure[]{
\label{fig:8-83-2--38}
\begin{tikzpicture}[scale=.8]
\node [nodr] at (0,0) (v) [label={[xshift=.35cm, yshift=-0.65cm]{\scriptsize $8$}}] {};
\node [nod2] at (1,.45) (v1)  [label=above:{\scriptsize $2$}] {}
	edge [] (v);
\node [nod2] at (1.5,-.65) (v2)[label=right:{\scriptsize $3$}] {}
	edge [] (v);
\node [nod2w] at (.65,-1.5) (v3)  {}
	edge  (v)
	edge  (v2);		
\node [nod2w] at (-.65,-1.5) (v4)  {}
	edge [] (v);
\node [nod2w] at (-1.5,-.65) (v5)   {}
	edge [] (v);
\node [nod2] at (-1.5,.65) (v6)  [label=above:{\scriptsize $3$}]  {}
	edge [] (v)
	edge [] (v5);	
\node [nod2w] at (-.65,1.5) (v7)   {}
	edge [] (v);	
\node [nod2w] at (.65,1.5) (v8)   {}
	edge [] (v);
\node [nod2w] at (1.8,.8) (v9)   {}
	edge [] (v1)
	edge [] (v2);						
\end{tikzpicture} }
\caption{Reducible configurations where the vertices, which have only one incident edge drawn, are allowed to permute.}
\label{fig:reducible-conf}
\end{figure}

In the graph $G$, we will show that the configurations in Figure \ref{fig:reducible-conf} are reducible, where the labels in those configurations indicate the degrees of vertices, and the unlabelled vertices (white bullets) can have any degree at least as large as shown in the figures.

\begin{lemma}\label{lem:8-8383--38}
The configuration in Figure \ref{fig:8-8383--38}  is reducible.
\end{lemma}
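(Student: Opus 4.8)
The plan is to argue by contradiction, assuming $G$ contains the configuration of Figure~\ref{fig:8-8383--38}, and to produce a total $9$-coloring of $G$. Write $v$ for the central $8$-vertex with neighbours $v_1,\dots,v_8$ in cyclic order, where $v_4,v_6,v_7$ are the three $3$-vertices, the faces $vv_3v_4$, $vv_4v_5$, $vv_5v_6$, $vv_7v_8$ are triangles, and $vv_6v_{11}v_7$ is the $4$-face (Lemma~\ref{lem:uv-10} guarantees that the incident triangles are genuinely as drawn, since every $3$-vertex here is joined to the $8$-vertex $v$). First I would delete the edge $vv_7$; by minimality $G-vv_7$ admits a proper $9$-coloring $\varphi$, and after uncolouring the $3$-vertices $v_4,v_6,v_7$ it suffices, by Remark~\ref{rem:4-vert-later}, to assign a colour to $vv_7$ alone.

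Since the seven edges $vv_1,\dots,vv_6,vv_8$ together with $\varphi(v)$ use exactly eight of the nine colours, there is a unique colour, say $9$ after relabelling, that is missing at $v$. If $9\notin\{\varphi(v_7v_8),\varphi(v_7v_{11})\}$, then colouring $vv_7$ with $9$ finishes the proof, so I may assume $9\in\{\varphi(v_7v_8),\varphi(v_7v_{11})\}$. From here the strategy is the one used repeatedly above: exploit the fact that $v_4$ and $v_6$ are now uncoloured $3$-vertices, so the colours of $vv_4$ and $vv_6$ may be interchanged with colours on the remaining edges at $v_4$ and $v_6$, thereby changing which colour is missing at $v$ until the missing colour becomes admissible at $v_7$.

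Concretely, in the case $9=\varphi(v_7v_{11})$ I would recolour $vv_6$ with $9$: this is legitimate because $\varphi(v_6v_{11})\neq\varphi(v_7v_{11})=9$ (these two edges meet at $v_{11}$) and $9$ is missing at $v$, provided also that $\varphi(v_6v_5)\neq 9$; then the old colour of $vv_6$ becomes the colour missing at $v$, and it can be placed on $vv_7$ unless it happens to equal $\varphi(v_7v_8)$, in which case a further swap at $v_4$ is applied. The symmetric case $9=\varphi(v_7v_8)$ is handled by the analogous interchange performed on the triangle side, and in each branch one checks the few residual clashes against $\varphi(v_7v_8)$ or $\varphi(v_7v_{11})$ and resolves them with one more swap at $v_4$ or $v_6$.

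I expect the main obstacle to be precisely the coupling created by the $4$-face $vv_6v_{11}v_7$: because $v_6$ and $v_7$ share the neighbour $v_{11}$, a recolouring carried out at $v_6$ can interfere with the colour that must be avoided at $v_7$, and the direct interchange above fails exactly when $9$ also appears on $v_6v_5$. In that situation I would propagate the recolouring backwards along the run of three consecutive triangles $vv_5v_6$, $vv_4v_5$, $vv_3v_4$, alternately interchanging colours at the uncoloured vertices $v_6$ and $v_4$ in the manner of the path-alternation argument of Lemma~\ref{lem:8-has-no-233383}; since this run has length three and terminates at the triangle $vv_3v_4$ whose apex $v_4$ is free, the alternation releases the required colour and completes the total $9$-coloring, contradicting the choice of $G$ and showing the configuration is reducible.
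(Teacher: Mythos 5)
Your setup is fine, and it runs parallel to (but is not the same as) the paper's: the paper deletes $vv_4$, the edge to the $3$-vertex lying on \emph{two} $3$-faces, and organizes the whole proof as a case analysis on the two edge colours at $v_4$, whereas you delete $vv_7$. Deleting $vv_7$ is not inherently wrong, and your first reduction (the unique colour $9$ missing at $v$ must lie in $\{\varphi(v_7v_8),\varphi(v_7v_{11})\}$, and recolouring $vv_6$ with $9$ is legal under exactly the conditions you state) is correct. The first genuine gap is the repair move you lean on afterwards. A ``swap at $v_4$'' in your sense interchanges $\varphi(vv_4)$ with $\varphi(v_4v_3)$ or $\varphi(v_4v_5)$; this swap is pivoted at the \emph{uncoloured} vertex $v_4$, so its far endpoints are $v$ and the white vertex $v_3$ (resp.\ $v_5$), which may carry arbitrarily many unseen edges and a vertex colour you cannot control. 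The colour arriving at $v_3$ (resp.\ $v_5$) can clash there, and nothing in the configuration rules this out. Every interchange in the paper is pivoted at a \emph{white} vertex, with far ends $v$ (where the incoming colour is verified to be the one missing at $v$) and an \emph{uncoloured} $3$-vertex (where the only other incident colour is explicitly tracked); your swaps at $v_4$ and $v_6$ do not have this certifiable form. The certifiable substitute --- recolouring $vv_4$ outright with the colour currently missing at $v$ --- works only when that colour avoids both $\varphi(v_4v_3)$ and $\varphi(v_4v_5)$, a case split you never make.

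The second gap is the last paragraph: the appeal to the path-alternation of Lemma~\ref{lem:8-has-no-233383} is not available here, because in that lemma a lengthy case analysis \emph{first} forces the relevant edge colours to alternate between two specific colours; you have established no such structure. Concretely, take $\varphi(vv_j)=j$ for $j\in\{1,\dots,6\}$, $\varphi(vv_8)=7$, $\varphi(v)=8$, $\varphi(v_3v_4)=9$, $\varphi(v_4v_5)=3$, $\varphi(v_5v_6)=9$, $\varphi(v_6v_{11})=3$, $\varphi(v_{11}v_7)=9$, $\varphi(v_7v_8)=5$. This is a legal partial colouring of your configuration; your recolouring of $vv_6$ with $9$ is blocked by $\varphi(v_5v_6)=9$, and the $\{3,9\}$-Kempe chain starting at $v$ runs through the entire fan and continues through $v_{11}$ to $v_7$, so exchanging it places the freed colour $3$ on $v_{11}v_7$ and still leaves no admissible colour for $vv_7$. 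The case can be rescued --- swap $\varphi(vv_5)$ with $\varphi(v_5v_6)$ (pivot $v_5$, valid since $\varphi(v_6v_{11})\neq 5$), then recolour $vv_4$ with $5$, then colour $vv_7$ with $4$ --- but in the sibling branch $\varphi(v_6v_{11})=5$ that move is blocked too, and one is forced into a different sequence that must involve the triangle $vv_7v_8$ (a simultaneous double swap at $v_3$ and $v_5$, then a swap at $v_8$, then a recolouring of $vv_4$). These branch-dependent sequences are exactly the content a complete proof must supply, and they are absent from your sketch; as written, the argument stops where the real exhaustive case analysis --- which is essentially the entirety of the paper's proof --- would have to begin.
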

\begin{proof}

Suppose that $G$ contains this configuration. Denote by $v$ the $8$-vertex and by $t,y,z$ its $3$-neighbours.
Consider a proper $9$-coloring of $G-vt$, and uncolor the vertices $t,y,z$. We will find an appropriate color only for $vt$ by Remark \ref{rem:4-vert-later}.

We may suppose that the coloring is the one shown in Figure \ref{fig:conf8-8383--38}. Note that $9\in\{a,b\}$; otherwise $vt$ would have an available color. \medskip

\textbf{Case 1.} $a=9$.\medskip

$\quad$\textbf{Case 1.1.} $b=1$.

Then  $9\in\{c,d\}$; otherwise we would color $vy$ with $9$ and $vt$ with $6$.
Similarly we obtain $9\in\{e,f\}$. 

We may suppose that $d=7$; otherwise we would interchange   the colors of $xv$ and $xt$ as well as the colors of $wv$ and $wt$, and later we color $vy$ with $7$ and $vt$ with $6$. Moreover, $f=7$; otherwise we would interchange   the colors of $xv$ and $xt$ as well as the colors of $wv$ and $wt$, and later we color $vz$ with $7$ and $vt$ with $5$, where we note that $e\neq 7$ as $d=7$. It follows that $e=9$ as $9\in\{e,f\}$. Let us now interchange  the colors of $xv$ and $xt$, the colors of $wv$ and $wt$, and the colors of $uv$ and $uz$. Then we can color $vt$ with $4$.\medskip

$\quad$\textbf{Case 1.2.} $b=6$.

Observe that  $9\in\{e,f\}$; otherwise we would color $vz$ with $9$ and $vt$ with $5$.
Moreover  $1\in\{e,f\}$; otherwise we would interchange  the colors of $xv$ and $xt$, and we color $vz$ with $1$ and $vt$ with $5$. Thus $\{e,f\}=\{1,9\}$. 
If $f=9$, then we interchange  the colors of $uv$ and $uz$, and color $vt$ with $4$. Therefore $f=1$. In this case, we interchange the colors of $uv$ and $uz$ as well as the colors of $xv$ and $xt$, and we color $vt$ with $4$.\medskip

$\quad$\textbf{Case 1.3.} $b\notin \{1,6\}$.

Then  $9\in\{c,d\}$; otherwise we would color $vy$ with $9$ and $vt$ with $6$.
Moreover  $1\in\{c,d\}$; otherwise we would interchange  the colors of $xv$ and $xt$, and we color $vy$ with $1$ and $vt$ with $6$. Thus $\{c,d\}=\{1,9\}$. 
If $c=9$, then we interchange  the colors of $wv$ and $wy$, and color $vt$ with $7$. Therefore $c=1$. In this case, we interchange the colors of $wv$ and $wy$ as well as the colors of $xv$ and $xt$, and we color $vt$ with $7$.\medskip

\textbf{Case 2.} $b=9$.\medskip

$\quad$\textbf{Case 2.1.} $a=6$.

Note that  $9\in\{e,f\}$; otherwise we would color $vz$ with $9$ and $vt$ with $5$.
Moreover  $7\in\{e,f\}$; otherwise we would interchange  the colors of $wv$ and $wt$, and we color $vz$ with $7$ and $vt$ with $5$. Thus $\{e,f\}=\{7,9\}$. 
If $f=9$, then we interchange  the colors of $uv$ and $uz$, and color $vt$ with $4$. Therefore $f=7$. In this case, we interchange the colors of $uv$ and $uz$ as well as the colors of $wv$ and $wt$, and we color $vt$ with $4$.\medskip

$\quad$\textbf{Case 2.2.} $a=7$.

We may suppose that $9\in\{c,d\}$; otherwise we would color $vy$ with $9$ and $vt$ with $6$.
Similarly we obtain $9\in\{e,f\}$. It follows that $d=9$ as $b=9$, and so $f=9$.
If $e\neq 4$, then we would interchange   the colors of $uv$ and $uz$, and we color $vt$ with $4$. Thus we may assume that $e=4$. Let us now interchange  the colors of $xv$ and $xt$ as well as the colors of $wv$ and $wt$. We color $vz$ with $1$, and $vt$ with $5$.\medskip

$\quad$\textbf{Case 2.3.} $a\notin \{6,7\}$.

Then  $9\in\{c,d\}$; otherwise we would color $vy$ with $9$ and $vt$ with $6$.
It follows that $d=9$ as $b=9$. In this case we interchange  the colors of $wv$ and $wt$, and we color $vy$ with $7$ and $vt$ with $6$. 
\end{proof}

\begin{figure}[htb]
\centering  
\subfigure[]{
\label{fig:conf8-8383--38} 
\begin{tikzpicture}[scale=1]
\node [nod2] at (0,0) (v) [label={[xshift=.35cm, yshift=-0.65cm]{\scriptsize $8$}}] {};
\node at (0,0) (v) [label=above:{\scriptsize $v$}] {};
\node [nod2w] at (1.5,.65) (v1) [label=right:{\scriptsize $u$}] {}
	edge [] node[midway,below,font=\scriptsize] {$4$}(v);

\node [nod2] at (1.5,-.65) (v2) [label=right:{\scriptsize $z$}] {}
	edge [] node[midway,below,font=\scriptsize] {$5$} (v)
	edge [] node[midway,right,font=\scriptsize] {$f$} (v1);	
\node [nod2] at (.65,-1.5) (v3) [label=below:{\scriptsize $y$}] {}
	edge []  node[midway,left,font=\scriptsize] {$6$} (v);		
\node [nod2w] at (-.65,-1.5) (v4) [label=below:{\scriptsize $w$}]  {}
	edge [] node[midway,left,font=\scriptsize] {$7$} (v)
	edge [] node[midway,below,font=\scriptsize] {$c$} (v3);
\node [nod2] at (-1.5,-.65) (v5) [label=below:{\scriptsize $t$}]  {}
	edge [green!70!black] (v)
	edge []  node[midway,below,font=\scriptsize] {$b$}(v4);
\node [nod2w] at (-1.5,.65) (v6) [label=left:{\scriptsize $x$}] {}
	edge [] node[midway,above,font=\scriptsize] {$1$} (v)
	edge [] node[midway,left,font=\scriptsize] {$a$} (v5);	

\node [nod2w] at (-.65,1.5) (v7)   {}
	edge [] node[midway,right,font=\scriptsize] {$2$} (v);	
\node [nod2w] at (.65,1.5) (v8)  {}
	edge [] node[midway,right,font=\scriptsize] {$3$} (v);

\node [nod2w] at (2,-2) (v9) [label=right:{\scriptsize $p$}]  {}
	edge [] node[midway,below,font=\scriptsize] {$d$} (v3)	
	edge [] node[midway,right,font=\scriptsize] {$e$} (v2);						
\end{tikzpicture}}
\subfigure[]{
\label{fig:conf8-8383--3--38} 
\begin{tikzpicture}[scale=1]
\node [nod2] at (0,0) (v) [label={[xshift=.35cm, yshift=-0.67cm]{\scriptsize $8$}}] {};
\node at (0,0) (v) [label=above:{\scriptsize $v$}] {};
\node [nod2] at (1.5,.65) (v1) [label=right:{\scriptsize $u$}] {}
	edge [] node[midway,below,font=\scriptsize] {$4$}(v)
	edge [] node[midway,right,font=\scriptsize] {$h$} (v8);

\node [nod2] at (1.5,-.65) (v2) [label=right:{\scriptsize $z$}] {}
	edge [] node[midway,below,font=\scriptsize] {$5$} (v);	
\node [nod2] at (.65,-1.5) (v3) [label=below:{\scriptsize $y$}] {}
	edge []  node[midway,left,font=\scriptsize] {$6$} (v);		
\node [nod2w] at (-.65,-1.5) (v4) [label=below:{\scriptsize $w$}]  {}
	edge [] node[midway,left,font=\scriptsize] {$7$} (v)
	edge [] node[midway,below,font=\scriptsize] {$c$} (v3);
\node [nod2] at (-1.5,-.65) (v5) [label=left:{\scriptsize $t$}]  {}
	edge [green!70!black] (v)
	edge []  node[midway,below,font=\scriptsize] {$b$}(v4);
\node [nod2w] at (-1.5,.65) (v6) [label=left:{\scriptsize $x$}] {}
	edge [] node[midway,above,font=\scriptsize] {$1$} (v)
	edge [] node[midway,left,font=\scriptsize] {$a$} (v5);	

\node [nod2w] at (-.65,1.5) (v7)   {}
	edge [] node[midway,right,font=\scriptsize] {$2$} (v);	
\node [nod2w] at (.65,1.5) (v8) [label=above:{\scriptsize $r$}] {}
	edge [] node[midway,right,font=\scriptsize] {$3$} (v);

\node [nod2w] at (2,-2) (v9) [label=right:{\scriptsize $p$}]  {}
	edge [] node[midway,below,font=\scriptsize] {$d$} (v3)	
	edge [] node[midway,right,font=\scriptsize] {$e$} (v2);	
\node [nod2w] at (2.6,0) (v10) [label=right:{\scriptsize $q$}]  {}
	edge [] node[midway,above,font=\scriptsize] {$g$} (v1)	
	edge [] node[midway,below,font=\scriptsize] {$f$} (v2);							
\end{tikzpicture}}
\caption{ }
\end{figure}

\begin{lemma}\label{lem:8-8383--3--38}
The configuration in Figure \ref{fig:8-8383--3--38}  is reducible.
\end{lemma}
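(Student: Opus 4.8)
I would follow the same color-extension (reducibility) argument used for Lemma~\ref{lem:8-8383--38}, adapted to the longer outer boundary of Figure~\ref{fig:8-8383--3--38}. Suppose $G$ contains the configuration, let $v$ be the $8$-vertex, and name its four $3$-neighbours $t,y,z,u$ together with the remaining vertices and the edge-colors $a,b,c,d,e,f,g,h$ exactly as in the auxiliary Figure~\ref{fig:conf8-8383--3--38}. I would take a proper $9$-coloring of $G-vt$, uncolor the $3$-vertices $t,y,z,u$, and normalize the palette so that the seven colored edges at $v$ receive $1,\dots,7$, the vertex $v$ receives color $8$, and color $9$ is the unique color missing at $v$. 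By Remark~\ref{rem:4-vert-later} it then suffices to color the single edge $vt$.

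First I would record the initial obstruction. The only edges incident to $t$ besides $vt$ are $tx$ and $tw$, colored $a$ and $b$ with $a\neq b$; hence $vt$ can be colored $9$ unless $9\in\{a,b\}$, and then exactly one of $a,b$ equals $9$. This splits the argument into the two cases $a=9$ and $b=9$, mirroring the two cases of Lemma~\ref{lem:8-8383--38}. Within each case the strategy is identical: a color for $vt$ is produced either by recoloring one of the edges $vy,vz,vu$ (each running to a $3$-vertex sitting on the outer faces) with color $9$ and releasing its old color for $vt$, or by a local interchange on one of the three consecutive $3$-faces at $x$ and $w$. Each time such a move is blocked it forces one of the outer colors to take a prescribed value; I would accumulate these forced equalities as in Lemma~\ref{lem:8-8383--38}, obtaining successively $9\in\{c,d\}$, then $9\in\{e,f\}$, then $9\in\{g,h\}$, which pin down the relevant colors along the boundary.

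The genuinely new feature, compared with Lemma~\ref{lem:8-8383--38}, is that there are two consecutive $4$-faces $(v,y,p,z)$ and $(v,z,q,u)$ rather than a single one, together with the extra $3$-neighbour $u$ on the far $3$-face $(v,u,r)$. Accordingly I expect the subcase tree to roughly double: after fixing $b\in\{1,6\}$ or $b\notin\{1,6\}$ as in the previous lemma, I would split further on the colors realized along the two $4$-faces, and finish---exactly as in the path-alternation endgame of Lemma~\ref{lem:8-has-no-233383}---by alternating the colors of the edges of the path $v\,y\,p\,z\,q\,u$ (and, where needed, the extra interchange of $ur$ and $uv$ at the far $3$-face). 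Because $a=9$ (or $b=9$), every color moved by these interchanges stays clear of $\{1,\dots,7\}$ at $v$, so the only points left to verify are the absence of clashes at the outer vertices $p,q,r$.

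The step I expect to be the main obstacle is precisely the control of these interchanges along the two $4$-faces. Swapping a color across a $4$-face transports it between the two boundary edges meeting at the outer vertex ($yp$ and $zp$ at $p$, or $zq$ and $uq$ at $q$), and one must guarantee that this neither reinstates a color of $\{1,\dots,7\}$ on an edge at $v$ nor conflicts with the remaining edges at $p$, $q$, and $r$. The crux is to show that the forced equalities collected above, namely $9\in\{c,d\}$, $9\in\{e,f\}$, $9\in\{g,h\}$ and the symmetric constraints arising in the case $b=9$, cannot all hold simultaneously without already leaving a color free for $vt$; once this incompatibility is in hand, the final alternation along $v\,y\,p\,z\,q\,u$ releases a color for $vt$ and completes the reduction.
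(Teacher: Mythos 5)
Your setup reproduces the paper's own first steps exactly: delete $vt$, uncolor $t,y,z,u$, reduce to coloring the single edge $vt$ via Remark~\ref{rem:4-vert-later}, observe $9\in\{a,b\}$, split on $a=9$ versus $b=9$, and extract the forced memberships $9\in\{c,d\}$, $9\in\{e,f\}$, $9\in\{g,h\}$ by attempting to recolor $vy$, $vz$, $vu$ with $9$. Up to that point you are on the paper's track.

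The gap is the endgame, which is where the content of the lemma lies, and your proposed endgame is wrong on two counts. First, the forced equalities are \emph{not} mutually incompatible, so there is no ``incompatibility'' to derive: in the hardest branches they all hold at once. With $a=9$, $b=1$, a second round of blocked swaps (on the triangles $vxt$ and $vwt$ --- a round your outline never performs) forces $d=f=h=7$ and hence $c=e=g=9$; with $b=9$, $a=7$ it forces $d=f=h=9$ and $g=3$. No contradiction arises; the paper instead finishes \emph{constructively}, interchanging colors on the $3$-faces ($xv\leftrightarrow xt$, $wv\leftrightarrow wt$, $rv\leftrightarrow ru$, or $wv\leftrightarrow wy$), which moves $9$ onto edges at $v$ and thereby frees one of the colors $1,3,7$ for $vt$; the verification burden for these swaps sits at $t$ and at the outer vertices (using adjacency facts such as $c\neq b$ at $w$ and $g\neq f$ at $q$), not at $v$, where properness is automatic because $9$ was missing there. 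Second, your proposed final move --- alternating colors along the path $v\,y\,p\,z\,q\,u$ as in Lemma~\ref{lem:8-has-no-233383} --- is not available here: that alternation needs the path's edges to carry two colors alternately, whereas in the terminal configurations above the path carries $6,7,9,7,9,4$ (respectively $6,9,e,9,3,4$), so the swap is not even well defined; indeed the paper's proof never recolors the $4$-face edges $yp,pz,zq,qu$ at all, only reads their colors. What is missing, then, is precisely the proof: the five-subcase analysis ($b=1$, $b=6$, $b\notin\{1,6\}$ under $a=9$; $a=6$, $a\neq 6$ under $b=9$), each subcase closed by explicit triangle swaps whose legality your outline does not check.
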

\begin{proof}

Suppose that $G$ contains this configuration. Denote by $v$ the $8$-vertex and by $t,y,z,u$ its $3$-neighbours.
Consider a proper $9$-coloring of $G-vt$, and uncolor the vertices $t,y,z,u$. We will find an appropriate color only for $vt$ by Remark \ref{rem:4-vert-later}.
We may suppose that the coloring is the one shown in Figure \ref{fig:conf8-8383--3--38}. Note that $9\in\{a,b\}$; otherwise $vt$ would have an available color. \medskip

\textbf{Case 1.} $a=9$.\medskip

$\quad$\textbf{Case 1.1.} $b=1$.

Observe that  $9\in\{c,d\}$; otherwise we would color $vy$ with $9$ and $vt$ with $6$.
Similarly we obtain $9\in\{e,f\}$ and $9\in\{g,h\}$. 
We may suppose that $d=7$; otherwise we would interchange   the colors of $xv$ and $xt$ as well as the colors of $wv$ and $wt$, and later we color $vy$ with $7$ and $vt$ with $6$. Moreover, $f=7$; otherwise we would interchange   the colors of $xv$ and $xt$ as well as the colors of $wv$ and $wt$, and later we color $vz$ with $7$ and $vt$ with $5$, where we note that $e\neq 7$ as $d=7$. Furthermore,  $h=7$; otherwise we would interchange   the colors of $xv$ and $xt$ as well as the colors of $wv$ and $wt$, and later we color $vu$ with $7$ and $vt$ with $4$, where we note that $g\neq 7$ as $f=7$. It follows that $g=9$ as $9\in\{g,h\}$. Let us now interchange  the colors of $xv$ and $xt$, the colors of $wv$ and $wt$, and the colors of $rv$ and $ru$. Then we can color $vt$ with $3$.\medskip

$\quad$\textbf{Case 1.2.} $b=6$.

Then  $9\in\{g,h\}$; otherwise we would color $vu$ with $9$ and $vt$ with $4$.
Moreover  $1\in\{g,h\}$; otherwise we would interchange  the colors of $xv$ and $xt$, and we color $vu$ with $1$ and $vt$ with $4$. Thus $\{g,h\}=\{1,9\}$. 
If $h=9$, then we interchange  the colors of $rv$ and $ru$, and color $vt$ with $3$. Therefore we may suppose that $h=1$. In this case, we interchange the colors of $rv$ and $ru$ as well as the colors of $xv$ and $xt$, and we color $vt$ with $3$.\medskip

$\quad$\textbf{Case 1.3.} $b\notin \{1,6\}$.

We may suppose that  $9\in\{c,d\}$; otherwise we would color $vy$ with $9$ and $vt$ with $6$.
Moreover  $1\in\{c,d\}$; otherwise we would interchange  the colors of $xv$ and $xt$, and we color $vy$ with $1$ and $vt$ with $6$. Thus $\{c,d\}=\{1,9\}$. 
If $c=9$, then we interchange  the colors of $wv$ and $wy$, and color $vt$ with $7$. Therefore we may suppose that $c=1$. In this case, we interchange the colors of $wv$ and $wy$ as well as the colors of $xv$ and $xt$, and we color $vt$ with $7$.\medskip

\textbf{Case 2.} $b=9$.\medskip

$\quad$\textbf{Case 2.1.} $a=6$.

Then  $9\in\{g,h\}$; otherwise we would color $vu$ with $9$ and $vt$ with $4$.
Moreover  $7\in\{g,h\}$; otherwise we would interchange  the colors of $wv$ and $wt$, and we color $vu$ with $7$ and $vt$ with $4$. Thus $\{g,h\}=\{7,9\}$. 
If $h=9$, then we interchange  the colors of $rv$ and $ru$, and color $vt$ with $3$. Therefore we may assume that $h=7$. In this case, we interchange the colors of $rv$ and $ru$ as well as the colors of $wv$ and $wt$, and we color $vt$ with $3$.\medskip

$\quad$\textbf{Case 2.2.} $a\neq 6$.

Note that  $9\in\{c,d\}$; otherwise we would color $vy$ with $9$ and $vt$ with $6$. It follows that $d=9$ as $b=9$. If $a\neq 7$, then we interchange  the colors of $wv$ and $wt$, and we color $vy$ with $7$ and $vt$ with $6$. Thus we may assume that $a=7$.
Similarly as above we obtain $9\in\{e,f\}$ and $9\in\{g,h\}$. It then follows that $d=f=h=9$ as $b=9$.
If $g\neq 3$, then we would interchange   the colors of $rv$ and $ru$, and we color $vt$ with $3$. Hence $g=3$. Let us now interchange  the colors of $xv$ and $xt$ as well as the colors of $wv$ and $wt$. We color $vu$ with $1$, and $vt$ with $4$.
\end{proof}

\begin{lemma}\label{lem:8-383--383}
The configuration in Figure \ref{fig:8-383--383}  is reducible.
\end{lemma}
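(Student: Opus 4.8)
The plan is to reduce this configuration by the template already used for Lemmas~\ref{lem:8-8383--38} and~\ref{lem:8-8383--3--38}. Suppose $G$ contains the configuration of Figure~\ref{fig:8-383--383}: an $8$-vertex $v$ carrying two ``diamonds'', one formed by the triangles $vv_4v_5$ and $vv_5v_6$ and the other by $vv_7v_8$ and $vv_8v_1$, whose four tips $v_1,v_4,v_6,v_7$ are $3$-vertices and which are separated on one side by the $4$-face $vv_6v_{11}v_7$. Since $3+3<10$, Lemma~\ref{lem:uv-10} forbids any two of these tips from being adjacent, so the triangles really are as drawn and the colour interchanges below remain legal. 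I would delete the edge $vv_7$: unlike the outer tips $v_1,v_4$, the inner tip $v_7$ has its whole neighbourhood $\{v,v_8,v_{11}\}$ displayed, which keeps the forcing step clean. Taking the $9$-colouring of $G-vv_7$ supplied by minimality and uncolouring all four $3$-vertices, Remark~\ref{rem:4-vert-later} reduces the task to colouring the single edge $vv_7$.

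First I would normalise to a canonical colouring in the style of Figure~\ref{fig:conf8-8383--38}: give the seven edges $vv_1,\dots,vv_6,vv_8$ the colours $1,\dots,7$ in the cyclic order drawn, so that $8$ and $9$ are the two colours absent at $v$, and label the triangle edge $v_7v_8$ by $a$ and the $4$-face edge $v_7v_{11}$ by $b$. The opening forcing step is the familiar one: since $8$ and $9$ are free at $v$, the edge $vv_7$ is colourable unless both of them are blocked by $v_7$, which forces $9\in\{a,b\}$ (and, inside the cases, also pins down the position of $8$).

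From here the argument is a case analysis on which of $a,b$ equals $9$, refined by sub-cases on the triangle edges $v_4v_5,\,v_5v_6,\,v_8v_1$ of the two diamonds, mirroring Cases~1 and~2 of the preceding two lemmas. In each branch I would free a colour for $vv_7$ by a short chain of interchanges of incident edge pairs, such as $\{v_8v,\,v_8v_7\}$, $\{v_5v,\,v_5v_6\}$ and $\{v_{11}v_6,\,v_{11}v_7\}$, each time invoking the ``either $9$ lies on this pair, or we recolour $vv_7$ directly'' forcing that drove the earlier lemmas, and each time using the non-adjacency of the tips to guarantee that no two swapped edges clash.

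The hard part will be the bookkeeping rather than any single idea, and the genuinely new ingredient is the $4$-face. In the earlier lemmas both non-$v$ edges of the deleted tip were triangle edges, whereas here one of them, $v_7v_{11}$, bounds the $4$-face, so the extra vertex $v_{11}$ contributes the two coloured edges $v_6v_{11}$ and $v_{11}v_7$ that must be tracked throughout. Because the two diamonds communicate both through $v$ and through $v_{11}$, an interchange that clears the obstruction at $v_7$ can recreate a conflict at another tip or along the $4$-face; the delicate point is to choose, in every sub-case, a globally consistent set of swaps and to check that the finitely many cases are exhaustive.
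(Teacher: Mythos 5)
Your setup coincides with the paper's: delete the edge from $v$ to a tip of a diamond that also lies on the $4$-face (your $vv_7$ is the paper's $vz$), uncolour the four $3$-vertices, invoke Remark \ref{rem:4-vert-later} to reduce everything to colouring that single edge, normalise the edge colours at $v$ to $1,\dots,7$, and force $9$ onto one of the two edges at the deleted tip; you even name the correct interchange pairs ($v_8v/v_8v_7$, $v_5v/v_5v_6$, $v_{11}v_6/v_{11}v_7$, i.e.\ the paper's $wv/wz$, $uv/uy$, $py/pz$). But the proposal stops exactly where the proof begins. For a reducibility lemma the case analysis \emph{is} the proof: one must exhibit, in every case, a concrete sequence of recolourings after which $vv_7$ has a free colour, and whether such a sequence always exists is precisely what is being claimed. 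The paper needs six sub-cases (its Cases 1.1, 1.2, 2.1--2.4), and they are not all settled by "a short chain of interchanges of incident edge pairs": Case 1.2 alternates the colours along the length-four path $y\,p\,z\,w\,v$ through the $4$-face vertex, several cases free a colour by recolouring a \emph{different} tip edge (e.g.\ giving $vx$ or $vt$ the colour $9$) rather than by pair swaps, and Case 2.2 is closed by showing that two swaps reduce it to Case 1. Your closing admission --- that the delicate point is "to choose, in every sub-case, a globally consistent set of swaps and to check that the finitely many cases are exhaustive" --- concedes that this work is undone; as written this is a plan, not a proof.

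There is also a concrete error in the one step you do carry out. After normalisation it is not true that "$8$ and $9$ are free at $v$": the vertex $v$ itself is coloured, its colour must avoid $1,\dots,7$, hence equals $8$ or $9$, and after the harmless renaming one takes $\varphi(v)=8$, so the only colour genuinely available for an edge at $v$ is $9$. Consequently the correct forcing is $9\in\{a,b\}$ with the \emph{other} edge at $v_7$ completely unconstrained --- the paper's sub-cases $d=2$, $c=5$, $c=6$, $c=7$, $c\notin\{5,6,7\}$ all have that second edge carrying a small colour. Your reading, in which non-colourability requires both $8$ and $9$ to be "blocked by $v_7$", would force $\{a,b\}=\{8,9\}$, and a case analysis built on that premise would silently skip most of the configurations that actually have to be handled.
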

\begin{proof}

Suppose that $G$ contains this configuration. Denote by $v$ the $8$-vertex and by $x,y,z,t$ its $3$-neighbours.
Consider a proper $9$-coloring of $G-vz$, and uncolor the vertices $x,y,z,t$. We will find an appropriate color only for $vz$ by Remark \ref{rem:4-vert-later}.
We may suppose that the coloring is the one shown in Figure \ref{fig:conf8-383--383}. Note that $9\in\{c,d\}$; otherwise $vz$ would have an available color. \medskip

\textbf{Case 1.} $c=9$.\medskip

$\quad$\textbf{Case 1.1.} $d=2$.

We may suppose that $9\in\{e,f\}$; otherwise we would color $vy$ with $9$ and $vz$ with $7$.  Moreover $1\in\{e,f\}$;  otherwise we would interchange the colors of $wv$ and $wz$, and later we color $vy$ with $1$ and $vz$ with $7$. Therefore we have $\{e,f\}=\{1,9\}$.
If $f=1$, then we interchange the colors of $uv$ and $uy$ as well as the colors of $wv$ and $wz$, and later we color $vz$ with $6$. If $f=9$, then we interchange the colors of $uv$ and $uy$, and we color $vz$ with $6$.\medskip

$\quad$\textbf{Case 1.2.} $d\neq 2$.

Then  $9\in\{a,b\}$; otherwise we would color $vt$ with $9$ and $vz$ with $2$. It follows that $a=9$ as  $c=9$.  If $d\neq 1$ then we would interchange the colors of $wv$ and $wz$, and we color $tv$ with $1$ and $vz$ with $2$. Thus we assume that $d=1$.
Similarly as above we obtain $9\in\{e,f\}$. 

Suppose first that $e=9$. If $f\neq 1$ then we interchange   the colors of $py$ and $pz$ as well as the colors of $wv$ and $wz$, and later we color $tv$ with $1$ and $vz$ with $2$. Thus we may assume that $f=1$. We interchange the colors of the edges $uv$ and $uy$. Let us alternate the color of the edges of the path $ypzwv$.   Now the color $6$ is available for $vz$, and so we color it.
Next we suppose that $f=9$. If $e\neq 6$ then we  would interchange the colors of $uv$ and $uy$, and we color $vz$ with $6$.  Thus we may assume that $e=6$. Now we interchange the colors of $wv$ and $wz$ as well as  the colors of $py$ and $pz$, later we color $vt$ with $1$ and $vz$ with $2$.\medskip

\textbf{Case 2.} $d=9$.\medskip

$\quad$\textbf{Case 2.1.} $c=5$.

Then  $9\in\{e,f\}$; otherwise we would color $vy$ with $9$ and $vz$ with $7$.
Similarly, $9\in\{a,b\}$. Since $d=9$, we deduce that $f=9$. 
If $e\neq 6$, then we interchange the colors of $uv$ and $uy$, and we color $vz$ with $6$. Thus we may assume that $e=6$. Note that $6\in\{a,b\}$; otherwise we would interchange the colors of $uv$ and $uy$ as well as  the colors of $py$ and $pz$, and later we color $vt$ with $6$ and $vz$ with $2$. Hence $\{a,b\}=\{6,9\}$.
If $b=6$ then we  interchange the colors of $wv$ and $wt$, the colors of $uv$ and $uy$,  the colors of $py$ and $pz$,  and later we color $vz$ with $1$. Thus we obtain that $b=9$. In this case we interchange the colors of $wv$ and $wt$, and we color $vz$ with $1$. \medskip

$\quad$\textbf{Case 2.2.} $c=6$.

We may suppose that  $9\in\{e,f\}$; otherwise we would color $vy$ with $9$ and $vz$ with $7$.
Similarly, $9\in\{g,h\}$. It then follows that $f=h=9$ as $d=9$. 
If $e\neq 6$, then we interchange the colors of $uv$ and $uy$, and observe that this  reduces to Case 1 as the color  of $wz$ is missing color around $v$. Thus we may assume that $e=6$. Let us now interchange the colors of $uv$ and $uy$, the colors of $py$ and $pz$, and the colors of $wv$ and $wz$. We again observe that this  reduces to Case 1 as the color of $wz$ is missing color around $v$. \medskip

$\quad$\textbf{Case 2.3.} $c=7$.

Observe that $9\in\{g,h\}$; otherwise we would color $vx$ with $9$ and $vz$ with $5$.
If $f\neq 9$ then we interchange the colors of $py$ and $pz$, and we color $vz$ with $9$, where we note that $e\neq 7$. Thus we obtain $f=9$. It follows that $h=9$ as $9\in\{g,h\}$. We may suppose that $e=6$; otherwise we would interchange the colors of $uv$ and $uy$, and we color $vz$ with $6$.
Let us now interchange the colors of $uv$ and $uy$ as well as  the colors of $py$ and $pz$. We color $vx$ with $6$ and $vz$ with $5$.\medskip

$\quad$\textbf{Case 2.4.} $c\notin \{5,6,7\}$.

We may suppose that $9\in\{e,f\}$; otherwise we would color $vy$ with $9$ and $vz$ with $7$.
Similarly, $9\in\{g,h\}$. It then follows that $f=h=9$ as $d=9$. 
If $e\neq 6$ then we interchange the colors of $uv$ and $uy$, and we color $vz$ with $6$. Thus we obtain $e=6$. Now we  interchange the colors of $uv$ and $uy$ as well as  the colors of $py$ and $pz$. Later, we color $vx$ with $6$, and $vz$ with $5$.
\end{proof}

\begin{figure}[htb]
\centering  
\subfigure[]{
\label{fig:conf8-383--383} 
\begin{tikzpicture}[scale=1]
\node [nod2] at (0,0) (v) [label={[xshift=.35cm, yshift=-0.65cm]{\scriptsize $8$}}] {};
\node at (0,0) (v) [label=above:{\scriptsize $v$}] {};
\node [nod2w] at (1.5,.65) (v1) [label=right:{\scriptsize $u$}] {}
	edge [] node[midway,below,font=\scriptsize] {$6$}(v);
	
\node [nod2] at (1.5,-.65) (v2) [label=right:{\scriptsize $y$}] {}
	edge [] node[midway,below,font=\scriptsize] {$7$} (v)
	edge [] node[midway,right,font=\scriptsize] {$f$} (v1);	
\node [nod2] at (.65,-1.5) (v3) [label=below:{\scriptsize $z$}] {}
	edge [green!70!black]  (v);		
\node [nod2w] at (-.65,-1.5) (v4) [label=below:{\scriptsize $w$}]  {}
	edge [] node[midway,left,font=\scriptsize] {$1$} (v)
	edge [] node[midway,below,font=\scriptsize] {$c$} (v3);
\node [nod2] at (-1.5,-.65) (v5) [label=below:{\scriptsize $t$}]  {}
	edge []  node[midway,above,font=\scriptsize] {$2$}(v)
	edge []  node[midway,below,font=\scriptsize] {$b$}(v4);
\node [nod2w] at (-1.5,.65) (v6)  {}
	edge [] node[midway,above,font=\scriptsize] {$3$} (v);	

\node [nod2w] at (-.65,1.5) (v7)   {}
	edge [] node[midway,right,font=\scriptsize] {$4$} (v);	
\node [nod2] at (.65,1.5) (v8) [label=left:{\scriptsize $x$}] {}
	edge [] node[midway,right,font=\scriptsize] {$5$} (v)
	edge [] node[midway,right,font=\scriptsize] {$g$} (v1);		
	
\node [nod2w] at (-2,-1.8) (v9)   {}
	edge [] node[midway,left,font=\scriptsize] {$a$} (v5);		

\node [nod2w] at (2,-2) (v9) [label=right:{\scriptsize $p$}]  {}
	edge [] node[midway,below,font=\scriptsize] {$d$} (v3)	
	edge [] node[midway,right,font=\scriptsize] {$e$} (v2);		
		
\node [nod2w] at (1,2.5) (v9)   {}
	edge [] node[midway,left,font=\scriptsize] {$h$} (v8);						
\end{tikzpicture}}
\subfigure[]{
\label{fig:conf8-83--38--2}
\begin{tikzpicture}[scale=1]
\node [nod2] at (0,0) (v) [label={[xshift=.35cm, yshift=-0.65cm]{\scriptsize $8$}}] {};
\node at (0,0) (v) [label=above:{\scriptsize $v$}] {};
\node [nod2w] at (1.5,.65) (v1) [label=right:{\scriptsize $r$}] {}
	edge [] node[midway,below,font=\scriptsize] {$6$}(v);

\node [nod2] at (1.5,-.65) (v2) [label=right:{\scriptsize $t$}] {}
	edge [] node[midway,below,font=\scriptsize] {$7$} (v)
	edge [] node[midway,right,font=\scriptsize] {$e$} (v1);	
\node [nod2] at (.65,-1.5) (v3) [label=below:{\scriptsize $z$}] {}
	edge [green!70!black]  (v);		
\node [nod2w] at (-.65,-1.5) (v4) [label=below:{\scriptsize $y$}]  {}
	edge [] node[midway,left,font=\scriptsize] {$1$} (v)
	edge [] node[midway,below,font=\scriptsize] {$b$} (v3);
\node [nod2w] at (-1.5,-.65) (v5)  {}
	edge []  node[midway,above,font=\scriptsize] {$2$}(v);
\node [nod2] at (-1.5,.65) (v6) [label=below:{\scriptsize $x$}]  {}
	edge [] node[midway,above,font=\scriptsize] {$3$} (v);	

\node [nod2w] at (-.65,1.5) (v7)   {}
	edge [] node[midway,right,font=\scriptsize] {$4$} (v);	
\node [nod2w] at (.65,1.5) (v8) {}
	edge [] node[midway,right,font=\scriptsize] {$5$} (v);		
	
\node [nod2w] at (-1.6,1.8) (v9)   {}
	edge [] node[midway,left,font=\scriptsize] {$a$} (v6);		

\node [nod2w] at (2,-2) (v9) [label=right:{\scriptsize $p$}]  {}
	edge [] node[midway,below,font=\scriptsize] {$c$} (v3)	
	edge [] node[midway,right,font=\scriptsize] {$d$} (v2);						
\end{tikzpicture}}
\subfigure[]{
\label{fig:conf8-83-2--38}
\begin{tikzpicture}[scale=1]
\node [nod2] at (0,0) (v) [label={[xshift=.35cm, yshift=-0.65cm]{\scriptsize $8$}}] {};
\node at (0,0) (v) [label=above:{\scriptsize $v$}] {};
\node [nod2] at (1,.45) (v1) [label=above:{\scriptsize $u$}] {}
	edge [green!70!black]  (v);

\node [nod2] at (1.5,-.65) (v2) [label=right:{\scriptsize $w$}] {}
	edge [] node[midway,below,font=\scriptsize] {$7$} (v);
\node [nod2w] at (.65,-1.5) (v3) [label=below:{\scriptsize $y$}]  {}
	edge  node[midway,left,font=\scriptsize] {$6$} (v)
	edge node[midway,below,font=\scriptsize] {$b$}  (v2);		
\node [nod2w] at (-.65,-1.5) (v4)   {}
	edge [] node[midway,left,font=\scriptsize] {$5$} (v);
\node [nod2w] at (-1.5,-.65) (v5)  [label=left:{\scriptsize $t$}] {}
	edge []  node[midway,above,font=\scriptsize] {$4$}(v);
\node [nod2] at (-1.5,.65) (v6) [label=left:{\scriptsize $z$}] {}
	edge [] node[midway,above,font=\scriptsize] {$3$} (v)
	edge []  node[midway,left,font=\scriptsize] {$c$} (v5);	

\node [nod2w] at (-.65,1.5) (v7)   {}
	edge [] node[midway,right,font=\scriptsize] {$2$} (v);	
\node [nod2w] at (.65,1.5) (v8)  {}
	edge [] node[midway,right,font=\scriptsize] {$1$} (v);

\node [nod2w] at (1.8,.8) (v9) [label=right:{\scriptsize $x$}]  {}
	edge [] node[midway,below,font=\scriptsize] {$9$}(v1)
	edge [] node[midway,right,font=\scriptsize] {$a$} (v2);		

\node [nod2w] at (-1.6,1.8) (v10)   {}
	edge [] node[midway,left,font=\scriptsize] {$d$} (v6);	

\end{tikzpicture}}
\caption{ }
\end{figure}

\begin{lemma}\label{lem:8-83--38--2}
The configuration in Figure \ref{fig:8-83--38--2}  is reducible.
\end{lemma}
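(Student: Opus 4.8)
The plan is to argue against the minimality of $G$, following the template of Lemmas~\ref{lem:8-8383--38} and \ref{lem:8-383--383}. Denote by $v$ the $8$-vertex, by $z$ and $t$ the two $3$-vertices lying on the $3$-faces $vyz$ and $vrt$, by $p$ their common neighbour (so that $vzpt$ bounds a $4$-face), and by $x$ the $2$-neighbour of $v$. I would remove the edge $vz$; by minimality $G-vz$ admits a proper $9$-colouring $\varphi$, and I would uncolour the incident $4^-$-vertices $x,z,t$, so that by Remark~\ref{rem:4-vert-later} it suffices to colour $vz$. After renaming colours I may assume that the seven edges at $v$ other than $vz$ together with the vertex $v$ receive $1,\dots,7$ and $8$ as in Figure~\ref{fig:conf8-83--38--2}, with $vy=1$, $vx=3$, $vr=6$ and $vt=7$; write $b=\varphi(yz)$, $c=\varphi(zp)$, $d=\varphi(tp)$, $e=\varphi(rt)$, and $a$ for the colour of the second edge at $x$. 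In particular $9$ is the unique colour free at $v$.

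First I would record the base case: the colours forbidden for $vz$ are exactly $\{1,\dots,8\}\cup\{b,c\}$, so $vz$ can take $9$ unless $9\in\{b,c\}$; hence I assume $9\in\{b,c\}$. The argument then runs on a short list of \emph{safe} colour moves, each of which merely permutes the two colours on the two edges meeting at one unlabelled vertex, so that properness at the unknown part of that vertex's neighbourhood is automatic and only the known vertices $v,z,t$ need to be checked. The swap at $y$ (exchanging $\varphi(vy)=1$ and $\varphi(yz)=b$) is legal and frees colour $1$ at $v$ precisely when $b=9$; the swap at $r$ frees colour $6$ at $v$ precisely when $e=9$; and the swap at $p$ (exchanging $c$ and $d$) replaces $c$ by $d$ in the forbidden set of $z$, and is legal when $d\neq b$ and $c\notin\{7,e\}$. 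Finally, since $x$ is a $2$-vertex, its edge $vx$ may be recoloured to any colour free at both $v$ and $x$; when $a\neq9$ this lets me push $vx$ from $3$ to $9$, freeing colour $3$ at $v$ and yielding $vz=3$ whenever $3\notin\{b,c\}$.

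I would then split on which of $b,c$ equals $9$. If $c=9$, the swap at $p$ deletes $9$ from the forbidden set of $z$ (note $d\neq9$, as $zp$ and $tp$ meet at $p$, and $b\neq9$), after which $vz=9$ succeeds; the exceptions $d=b$ or $e=9$ I would clear by the swap at $r$ (freeing $6$) or by the $2$-vertex recolour. If $b=9$, then $9$ sits on the edge $yz$ to the unlabelled vertex $y$ and cannot be removed from $z$ except by relocating it onto $vy$ via the swap at $y$; this is where the $2$-vertex is decisive, since performing that swap (freeing $1$ and placing $9$ on $vy$) and then recolouring $vx$ from $3$ to $1$ frees colour $3$ at $v$, so $vz=3$ unless $c=3$, while the leftover pattern $\{b,c\}=\{9,3\}$ is settled by one further swap at $p$ along the $4$-face $vzpt$.

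The obstacle I expect is exactly this built-in circularity: every move that frees colour $9$ at $v$ simultaneously re-forbids it at $z$, so no single interchange closes the argument, and one must compose the $2$-vertex recolour with the rotations at $y$, $r$ and $p$ while preserving properness throughout. Because $y$, $r$ and $p$ have unknown degree, the whole scheme hinges on restricting to the safe common-vertex swaps together with the free $2$-vertex edge, so that no compatibility condition is ever imposed at those vertices; what remains is the routine but lengthy verification that each colour pattern $(a,b,c,d,e)$ consistent with $9\in\{b,c\}$ is defeated by one of these move-sequences.
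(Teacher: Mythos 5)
Your setup coincides with the paper's: delete $vz$, uncolour $x,z,t$, normalise so that the edges at $v$ receive $1,\dots,7$ with $vy=1$, $vx=3$, $vr=6$, $vt=7$ and $v$ receives $8$, and observe that $9\in\{b,c\}$. The gap lies in your move repertoire and in the legality conditions you attach to it, and it is fatal rather than cosmetic. First, two of your ``safe'' swaps have conditions you omit: the swap at $r$ requires not only $e=9$ (legality at $v$) but also $d\neq 6$ (the recoloured edge $rt=6$ meets $pt=d$ at $t$), and the swap at $y$ requires not only $b=9$ but also $c\neq 1$ (the recoloured edge $yz=1$ meets $zp=c$ at $z$). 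Second, your toolkit omits two moves the paper relies on essentially: recolouring the edge $vt$ (legitimate because $t$ is uncoloured; it frees colour $7$ at $v$), and the \emph{simultaneous} alternation of colours along the path $vrtpz$, which is not a composition of individually legal swaps---each of the two swaps it consists of creates a conflict at $t$ that only the other one removes, so every intermediate colouring is improper and a sequential scheme cannot reach it.

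Concretely, take $(a,b,c,d,e)=(9,5,9,6,9)$, which satisfies every adjacency constraint of the configuration. Then $vz=9$ is blocked by $c=9$; your swap at $y$ needs $b=9$, false; your swap at $p$ needs $c\neq e$, false; your swap at $r$ needs $d\neq 6$, false; and recolouring $vx$ is impossible since the only colour free at $v$ is $9=a$. So none of your moves is even legal, no sequence of them can begin, and your claimed ``routine verification'' fails at this pattern; in particular your assertion that in the case $c=9$ the exception $e=9$ is cleared ``by the swap at $r$ or by the $2$-vertex recolour'' is exactly what breaks. The paper (Case 2.3 of its proof) kills this pattern by alternating the colours along $vrtpz$ simultaneously, then recolouring $vx$ to $6$ and colouring $vz$ with $3$. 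Similarly, $(a,b,c,d,e)=(9,4,9,4,2)$ defeats all four of your moves but is handled in the paper by recolouring $vt$ to $9$ and colouring $vz$ with $7$; and in your $b=9$ branch the subcase $c=1$ blocks your swap at $y$ and is never addressed (it is the paper's Case 1.2, which again needs the extra machinery, including sequential swaps at $y$, $p$, $r$ whose legality must be checked in that order). So while your strategy is the right one in outline, the argument as proposed does not close: you need the two additional moves and a forcing-style case analysis of the kind the paper carries out.
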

\begin{proof}

Suppose that $G$ contains this configuration. Denote by $v$ the $8$-vertex, by $x$ its $2$-neighbour and by $z,t$ its $3$-neighbours.
Consider a proper $9$-coloring of $G-vz$, and uncolor the vertices $x,z,t$. We will find an appropriate color only for $vz$ by Remark \ref{rem:4-vert-later}.
We may suppose that the coloring is the one shown in Figure \ref{fig:conf8-83--38--2}. Note that $9\in\{b,c\}$; otherwise $vz$ would have an available color. \medskip

\textbf{Case 1.} $b=9$.\medskip

$\quad$\textbf{Case 1.1.} $c=3$.

Then $9\in\{d,e\}$; otherwise we would color $vt$ with $9$ and $vz$ with $7$. Moreover, we suppose that $1\in\{d,e\}$; otherwise we would  interchange   the colors of $yv$ and $yz$, and we color $vt$ with $1$ and $vz$ with $7$. Hence, $\{d,e\}=\{1,9\}$. 
If $e=9$ then we interchange   the colors of $rv$ and $rt$, and we color $vz$ with $6$. Thus $e=1$ and $d=9$. Let us interchange the colors of $yv$ and $yz$ as well as the colors of $pz$ and $pt$, and the colors of $rv$ and $rt$. Now the color $6$ is available for $vz$, and so we color it.\medskip

$\quad$\textbf{Case 1.2.} $c\neq 3$.

Observe that $a=9$; otherwise we would color $vx$ with $9$ and $vz$ with $3$. If $c\neq 1$ then we interchange the colors of $yv$ and $yz$, and we color $vx$ with $1$ and $vz$ with $3$. Hence we assume that $c=1$. On the other hand, if $9\notin\{d,e\}$ then we would color $vt$ with $9$ and $vz$ with $7$. Thus we may assume that $9\in\{d,e\}$.
First suppose that $d=9$. If $e\neq1$ then we interchange  the colors of $yv$ and $yz$ as well as the colors of $pz$ and $pt$, and we color $vx$ with $1$ and $vz$ with $3$. Hence, $e=1$. Let us interchange the colors of $rv$ and $rt$, the colors of $yv$ and $yz$, the colors of $pz$ and $pt$. Now we can color $vz$ with $6$.
Next we suppose that $e=9$. If $d\neq 6$ then we interchange  the colors of $rv$ and $rt$, and we color $vz$ with $6$. Hence, $d=6$. Let us interchange the colors of $pz$ and $pt$ as well as the colors of $yv$ and $yz$. Now we color $vx$ with $1$ and $vz$ with $3$.\medskip

\textbf{Case 2.} $c=9$.\medskip

$\quad$\textbf{Case 2.2.} $b=3$.

We may suppose that  $9\in\{d,e\}$; otherwise we would color $vt$ with $9$ and $vz$ with $7$. Then $e=9$ as $c=9$. If $d\neq 6$, then we interchange the colors of $rv$ and $rt$, and we color $vz$ with $6$. Thus $d=6$. 
Also, we may assume that $a=9$; otherwise we would color $vx$ with $9$, $vt$ with $3$ and $vz$ with $7$. Now we interchange the colors of $rv$ and $rt$ as well as the colors of $pz$ and $pt$. It follows that we can color $vx$ with $6$, $vt$ with $3$ and $vz$ with $7$.\medskip

$\quad$\textbf{Case 2.1.} $b=7$.

Observe that $a=9$; otherwise we would color $vx$ with $9$ and $vz$ with $3$.
Also, we  may suppose that $e=9$; otherwise we would interchange the colors of $pz$ and $pt$ (note that $d\neq 7$), and we color $vz$ with $9$. If $d\neq 6$, then we interchange the colors of $rv$ and $rt$, and we color $vz$ with $6$. Thus $d=6$.
Let us now interchange the colors of $rv$ and $rt$ as well as the colors of $pz$ and $pt$. It follows that we can color $vx$ with $6$ and $vz$ with $3$.\medskip

$\quad$\textbf{Case 2.3.} $b\notin \{3,7\}$.

Observe that $a=9$; otherwise we would color $vx$ with $9$ and $vz$ with $3$.
In addition $9\in\{d,e\}$; otherwise we would color $vt$ with $9$ and $vz$ with $7$. Since $c=9$, we obtain that  $e=9$. If $d\neq 6$, then we interchange the colors of $rv$ and $rt$, and we color $vx$ with $6$ and $vz$ with $3$. Thus $d=6$. Let us alternate the colors of the edges of the path $vrtpz$.
If $b\neq 6$, then we color $vx$ with $6$ and $vz$ with $3$. 
If $b=6$, then we interchange the colors of $yv$ and $yz$, and later we color $vx$ with $1$, $vz$ with $3$.
\end{proof}

\begin{lemma}\label{lem:8-83-2--38}
The configuration in Figure \ref{fig:8-83-2--38}  is reducible.
\end{lemma}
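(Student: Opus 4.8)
The plan is to argue by minimality exactly as in the preceding lemmas. Suppose $G$ contains the configuration, and let $v$ be the $8$-vertex, $u$ its $2$-neighbour, $w$ and $z$ its two $3$-neighbours lying on the $3$-faces $vwy$ and $vzt$, and $x$ the common neighbour of $u$ and $w$. I would delete the edge $vu$, take a proper $9$-colouring of $G-vu$ furnished by minimality, and uncolour $u,w,z$; since these are $4^-$-vertices, Remark \ref{rem:4-vert-later} lets me colour them last, so it suffices to colour the single edge $vu$. Normalising as in Figure \ref{fig:conf8-83-2--38}, the seven coloured edges at $v$ receive the colours $1,\dots,7$ and $\varphi(v)=8$, so $9$ is the only colour missing at $v$. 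The first observation is that $ux=9$, since otherwise $9$ is available for $vu$. Writing $a=\varphi(wx)$, $b=\varphi(wy)$, $c=\varphi(zt)$ and $d=\varphi(zs)$ (with $s$ the third neighbour of $z$), every colour is then forbidden at $vu$, so every subsequent step must free a colour at $v$ by pushing $9$ onto one of its edges, or else move $9$ off $ux$.

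The crucial point, and the reason the shared neighbour $x$ occurs in the hypothesis, is the following chain of reductions. If $9\notin\{a,b\}$ I would recolour $vw$ with $9$ (legitimate because $9$ is missing at both $v$ and $w$) and then colour $vu$ with $7$; hence $9\in\{a,b\}$. But $ux=9$ and $wx=a$ are incident at $x$, so $a\neq 9$, which forces $b=9$. Next, if $a\neq 6$ I would interchange the colours of $yv$ and $yw$ — valid because $9$ is missing at $v$ and $6\neq a$ at $w$ — thereby freeing $6$ at $v$ and colouring $vu$ with $6$; hence $a=6$. On the other triangle, if $9\notin\{c,d\}$ I would recolour $vz$ with $9$ and colour $vu$ with $3$, so $9\in\{c,d\}$; and if $c=9$ while $d\neq 4$, interchanging the colours of $tv$ and $tz$ frees $4$ at $v$ and I colour $vu$ with $4$. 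After these reductions only the cases with $a=6$, $b=9$, and either $(c,d)=(9,4)$ or $d=9$ remain.

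These remaining cases are where I expect the real difficulty. Once $b=9$, the entire $w$-side is frozen: the interchange of $xu$ and $xw$ and the recolouring of $vw$ to $9$ are both blocked by $wy=9$, while the interchange of $yv$ and $yw$ is blocked by $wx=6$. Consequently the colour $9$ that must be liberated has to be routed entirely through the triangle $vzt$, whose outer vertices $t$ and $s$ carry no degree information, so no colour may be introduced there by fiat. With the $w$-side locked there is essentially a single productive local move left on the $z$-side, and in exactly the surviving cases ($zt=9$ with $zs=4$, or $zs=9$) even that move is obstructed. My plan for these cases is to replace single swaps by a carefully ordered sequence of interchanges along $vzt$, splitting according to where the colour $9$ sits at $z$ and arranging that every intermediate colouring remains proper at the degree-unconstrained vertices $t$ and $s$, in the spirit of the exhaustive analyses of Lemmas \ref{lem:8-383--383} and \ref{lem:8-83--38--2}. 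The \emph{main obstacle} is precisely closing this last pair of cases: the constraint that $t$ and $s$ are only white bullets means one cannot simply recolour an edge incident to them, so the argument must chain interchanges rather than rely on any one local exchange.
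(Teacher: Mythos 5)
Your first round of reductions is correct and coincides exactly with the paper's: after normalising as in Figure \ref{fig:conf8-83-2--38} you correctly force $b=9$, then $a=6$, then $9\in\{c,d\}$, and then $d=4$ in the subcase $c=9$. But the proof is not finished — you leave the cases $(c,d)=(9,4)$ and $d=9$ open — and your diagnosis of why they are hard contains the actual error. You claim the $w$-side is ``frozen'' because the interchange of $xu$ and $xw$ is blocked by $wy=9$, and the interchange of $yv$ and $yw$ is blocked by $wx=6$. Each statement is true for the swap performed \emph{in isolation}, but the two obstructions cancel when the swaps are performed \emph{simultaneously}: after interchanging both pairs, the edges at $w$ carry $vw=7$, $wx=9$, $wy=6$ (all distinct), the edges at $x$ are merely permuted, and $yv=9$ is legal at $v$ because $9$ was the missing colour there. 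This paired interchange is precisely the paper's key move, and it unlocks both remaining cases: it frees the colour $6$ at $v$ and, crucially, changes the colour on the pendant edge at $u$ from $9$ to $6$.

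Concretely, after the double swap: if $(c,d)=(9,4)$, colour $vz$ with $6$ (legal at $z$ since $c=9$, $d=4$) and then $vu$ with the freed colour $3$ (legal at $u$ since $xu$ now carries $6$); if $d=9$ and $c\neq 6$, the same two moves work; and if $d=9$, $c=6$, additionally interchange $tv$ and $tz$ ($tz$ becomes $4$, avoiding $vz=3$ and $zd=9$, while $tv$ becomes $6$, which the double swap made available at $v$) and colour $vu$ with $4$. Note also that your proposed fallback — a ``carefully ordered sequence of interchanges along $vzt$'' — cannot succeed on its own: in the case $d=9$, $c=6$, the only moves confined to that triangle are recolouring $vz$ with $9$ (blocked at $z$ by $zd=9$) and swapping $tv$ with $tz$ (blocked at $v$ because $6$ sits on $yv$), so no colour can be freed at $v$ without first operating on the $w$-side. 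The missing ingredient is thus not a longer chain of swaps near $z$ but the single observation that two individually improper interchanges can be proper when executed together.
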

\begin{proof}
Suppose that $G$ contains this configuration. Denote by $v$ the $8$-vertex, by $u$ its $2$-neighbour and by $w,z$ its $3$-neighbours.
Consider a proper $9$-coloring of $G-vu$, and uncolor the vertices $w,u,z$. We will find an appropriate color only for $vu$ by Remark \ref{rem:4-vert-later}.

We may suppose that the coloring is the one shown in Figure \ref{fig:conf8-83-2--38}. Notice first that 
$b=9$; otherwise we would color $vw$ with $9$ and $vu$ with   $7$.
If $a\neq 6$, then we interchange the colors of $yv$ and $yw$, and we color $vu$ with $6$. Therefore, we may assume that $a=6$. Consider now the neighbours of $z$. If $9\notin \{c,d\}$ then we would color $vz$ with $9$, and $vu$ with $3$. Hence $9\in \{c,d\}$.
First suppose that $c=9$. If $d\neq 4$, then  we would interchange the colors of $tv$ and $tz$, and we color $vu$ with $4$. Thus $d=4$, and so we interchange the colors of $yv$ and $yw$ as well as the colors of $xu$ and $xw$. Now, we color $vz$ with $6$, and $vu$ with $3$. 
We next suppose that $d=9$. Let us interchange the colors of $yv$ and $yw$ as well as the colors of $xu$ and $xw$.  If $c\neq 6$, then  we color $vz$ with $6$, and $vu$ with $3$. If $c=6$, then  we interchange the colors of $tv$ and $tz$, and we color $vu$ with $4$. 
\end{proof}

In the rest of the paper, we will apply the discharging method to show that $G$ does not exist. We assign to each vertex $v$ a charge $\mu(v)=d(v)-4$ and to each face $f$ a charge $\mu(f)=\ell(f)-4$. By Euler's formula, the total charge is 
$$\sum_{v\in V}\mu(v)+\sum_{f\in F}\mu(f)=\sum_{v\in V}\left(d(v)-4\right)+\sum_{f\in F}(\ell(f)-4)=-8.$$

We next present some rules and redistribute the charges accordingly. Once the discharging finishes, we will show that the final charge $\mu^*(v)\geq 0$ and $\mu^*(f)\geq 0$ for each $v\in V(G)$ and $f\in F(G)$, contradicting the fact that the total charge is $-8$.

\subsection{Discharging Rules} \label{sub:} ~\medskip

We apply the following discharging rules.

\begin{itemize}
\setlength\itemsep{.5em}
\item[\textbf{R1:}] Every $2$-vertex receives $1$  from each of its neighbours.
%\item[\textbf{R1:}] Every bad $2$-vertex receives $1$  from each of its neighbours, and every semi-bad $2$-vertex receives $\frac{1}{2}$ from each of its neighbours.

\item[\textbf{R2:}] Every $3$-vertex receives $\frac{1}{3}$  from each of its neighbours.

\item[\textbf{R3:}] Every $5$-vertex sends $\frac{1}{3}$ to each incident $3$-face.

\item[\textbf{R4:}] Every $6^+$-vertex sends $\frac{1}{2}$ to each incident $3$-face containing a $4^-$-vertex and $\frac{1}{3}$ to other $3$-faces.

\item[\textbf{R5:}] Every $5^+$-face transfers its positive charge equally to its incident $8$-vertices.\medskip
\end{itemize}

\noindent
\textbf{Checking} $\mu^*(v), \mu^*(f)\geq 0$, for $v\in V(G), f\in F(G)$\medskip

\noindent
We initially show that $\mu^*(f)\geq 0$ for each $f\in F(G)$. Let $f\in F(G)$ be a face. If $f$ is a $4^+$-face, then $\mu(f)=\mu^*(f)=\ell(f)-4\geq 0$ by R5. Now we suppose that $f$ is a $3$-face $uvw$ with $d(u)\leq d(v)\leq d(w)$. The initial charge of $f$ is $\mu^*(f)=\ell(f)-4=-1$. 
If $d(u)\leq 4$, then, by Lemma \ref{lem:uv-10}, $v$ and $w$ are $6^+$-vertices. It follows that $f$ receives  $\frac{1}{2}$ from each of $v,w$ by R4, and so  $\mu^*(f)=-1+ 2\times \frac{1}{2}= 0$. 
If  $d(u)\geq 5$, then $f$ receives $\frac{1}{3}$ from each of $u,v,w$ by R3-R4,  and so  $\mu^*(f)=-1+3\times \frac{1}{3}= 0$.\bigskip

\noindent
Now let us show that  $\mu^*(v)\geq 0$ for each $v\in V(G)$.  We pick a vertex $v\in V(G)$ with $d(v)=k$. By Lemma \ref{lem:min-deg}, we have $k\geq 2$. \medskip

\textbf{(1).} Let $k\leq 3$.  By Lemma \ref{lem:uv-10}, each neighbour of $v$ is a $7^+$-vertex. If $v$ is a $2$-vertex, then $v$ receives $1$ from each of its neighbours by R1, and so $\mu^*(v)=d(v)-4+2\times 1= 0$. If $v$ is a $3$-vertex, then $v$ receives $\frac{1}{3}$ from each of its neighbours by R2, and so $\mu^*(v)=d(v)-4+3\times \frac{1}{3}= 0$.\medskip

\textbf{(2).} Let $k=4$. Note that $v$ neither receives nor sends any charge, and so  $\mu^*(v)=\mu(v)=d(v)-4= 0$.\medskip

\textbf{(3).} Let $k=5$. The initial charge of $v$ is $\mu(v)=1$. Since $G$ has no $4$-fan, we deduce that $m_3(v)\leq 3$. Then $v$ sends  $\frac{1}{3}$ to each incident $3$-face, and so $\mu^*(v)\geq 1-3\times \frac{1}{3}= 0$.\medskip 

\textbf{(4).} Let $k=6$. The initial charge of $v$ is $\mu(v)=2$. Each neighbour of $v$ is a $4^+$-vertex by Lemma \ref{lem:uv-10}. Since $G$ has no $4$-fan, we deduce that $m_3(v)\leq 4$. Then  $v$ sends at most $\frac{1}{2}$ to each incident $3$-face by R4, and so $\mu^*(v)\geq 2-4\times \frac{1}{2}= 0$. \medskip

\textbf{(5).} Let $k=7$.  The initial charge of $v$ is $\mu(v)=3$. Each neighbour of $v$ is a $3^+$-vertex by Lemma \ref{lem:uv-10}.  Since $G$ has no $4$-fan, we deduce that $m_3(v)\leq 5$.
If $m_3(v)\leq 1$, then   $\mu^*(v)\geq 3-\frac{1}{2}- 7\times \frac{1}{3}> 0$ after $v$ sends  at most $\frac{1}{2}$ to each incident $3$-face by R4,   $\frac{1}{3}$ to each $3$-neighbour by R2.  
If $m_3(v)=2$, then $v$ has at most six $3$-neighbours by  Lemma \ref{lem:uv-10}, and so $\mu^*(v)\geq 3-2\times\frac{1}{2}- 6\times \frac{1}{3}= 0$ after $v$ sends  at most $\frac{1}{2}$ to each incident $3$-face by R4,   $\frac{1}{3}$ to each $3$-neighbour by R2. Therefore we may suppose that $m_3(v)\geq 3$.  
If $m_3(v)=3$, then, by  Lemmas \ref{lem:uv-10} and \ref{lem:7-has-no-two-3-on-3face}, $v$ has at most four $3$-neighbours, and so  $\mu^*(v)\geq 3-3\times\frac{1}{2}- 4\times \frac{1}{3}> 0$ after $v$ sends  at most $\frac{1}{2}$ to each incident $3$-face by R4,   $\frac{1}{3}$ to each $3$-neighbour by R2.
If $m_3(v)=4$, then, similarly as above, by  Lemmas \ref{lem:uv-10} and \ref{lem:7-has-no-two-3-on-3face}, $v$ has at most two $3$-neighbours since $G$ has no $4$-fan, and so $\mu^*(v)\geq 3-4\times\frac{1}{2}- 2\times \frac{1}{3}>0$ after  $v$ sends  at most $\frac{1}{2}$ to each incident $3$-face by R4,   $\frac{1}{3}$ to each $3$-neighbour by R2. 
Finally, if $m_3(v)=5$, then by  Lemmas \ref{lem:uv-10} and \ref{lem:7-has-no-two-3-on-3face}, the fact that $G$ has no $4$-fan implies that $v$ has at most one $3$-neighbour, and so $\mu^*(v)\geq 3-5\times\frac{1}{2}-  \frac{1}{3}>0$ after $v$ sends  at most $\frac{1}{2}$ to each incident $3$-face by R4,  $\frac{1}{3}$ to each $3$-neighbour by R2.  \medskip 

%
%
%$\quad$\textbf{(5.1).} Let $m_3(v)=3$. By  Lemmas \ref{lem:uv-10} and \ref{lem:7-has-no-two-3-on-3face}, $v$ has at most four $3$-neighbours. Therefore  $\mu^*(v)\geq 3-3\times\frac{1}{2}- 4\times \frac{1}{3}> 0$ after $v$ sends  at most $\frac{1}{2}$ to each incident $3$-face by R4,   $\frac{1}{3}$ to each $3$-neighbour by R2.\medskip 
%
%$\quad$\textbf{(5.2).} Let $m_3(v)=4$. Similarly as above, by  Lemmas \ref{lem:uv-10} and \ref{lem:7-has-no-two-3-on-3face}, $v$ has at most two $3$-neighbours since $G$ has no $4$-fan.  Thus $\mu^*(v)\geq 3-4\times\frac{1}{2}- 2\times \frac{1}{3}>0$ after  $v$ sends  at most $\frac{1}{2}$ to each incident $3$-face by R4,   $\frac{1}{3}$ to each $3$-neighbour by R2.  \medskip 
%
%
%$\quad$\textbf{(5.3).} Let $m_3(v)=5$. Using  Lemmas \ref{lem:uv-10} and \ref{lem:7-has-no-two-3-on-3face}, the fact that $G$ has no $4$-fan implies that $v$ has at most one $3$-neighbour. Thus $\mu^*(v)\geq 3-5\times\frac{1}{2}-  \frac{1}{3}>0$ after $v$ sends  at most $\frac{1}{2}$ to each incident $3$-face by R4,  $\frac{1}{3}$ to each $3$-neighbour by R2.  \medskip 

\textbf{(6).} Let $k=8$. The initial charge of $v$ is $\mu(v)=4$, and $v$ has at most one $2$-neighbour by Lemma \ref{lem:8-has-one-2}. Since $G$ has no $4$-fan, we deduce that $m_3(v)\leq 6$.\medskip

We first suppose that $v$ has no $2$-neighbour. If $m_3(v)\leq 2$, then $\mu^*(v)\geq 4-2\times \frac{1}{2}-8\times \frac{1}{3}> 0$  after $v$ sends at most $\frac{1}{2}$ to each incident $3$-face by R4,  $\frac{1}{3}$ to each $3$-neighbour by R2.
If $3\leq m_3(v)\leq 4$ then $v$ has at most six $3$-neighbours by Lemma \ref{lem:uv-10}, and so $\mu^*(v)\geq 4-4\times \frac{1}{2}-6\times \frac{1}{3}= 0$  after $v$ sends at most $\frac{1}{2}$ to each incident $3$-face by R4,  $\frac{1}{3}$ to each $3$-neighbour by R2. 
Thus we may assume that $ m_3(v)\geq 5$. First suppose that $ m_3(v)=5$. In such a case,  $v$ has at most five $3$-neighbours by Lemma \ref{lem:uv-10}. If $v$  has at most four $3$-neighbours, then $\mu^*(v)\geq 4-5\times \frac{1}{2}-4\times \frac{1}{3}> 0$  after $v$ sends at most  $\frac{1}{2}$ to each incident $3$-face by R4,  $\frac{1}{3}$ to each $3$-neighbour by R2. 
Thus we may assume that $v$ has exactly five $3$-neighbours. If $v$ is incident to a $(5^+,5^+,8)$-triangle $f$, then $\mu^*(v)\geq 4-4\times \frac{1}{2}-\frac{1}{3}-5\times \frac{1}{3}= 0$  after $v$ sends at most  $\frac{1}{2}$ to each incident $3$-face other than $f$ by R4, $\frac{1}{3}$ to $f$ by R4,  $\frac{1}{3}$ to each $3$-neighbour by R2. 
Thus we may assume that $v$ is not incident to any $(5^+,5^+,8)$-triangle. In this case, we deduce that $v$ is incident to a $5^+$-face $f$ containing two $3$-neighbours of $v$ since the configuration in Figures \ref{fig:8-8383--3--38} and  \ref{fig:8-383--383} are reducible. By R5, $f$ sends at least $\frac{1}{3}$ to $v$. Thus $\mu^*(v)\geq 4+\frac{1}{3}-5\times \frac{1}{2}-5\times \frac{1}{3}> 0$  after $v$ sends at most $\frac{1}{2}$ to each incident $3$-face by R4,  $\frac{1}{3}$ to each $3$-neighbour by R2.

Finally suppose that $ m_3(v)=6$. Since $G$ has no $4$-fan, we deduce that  $v$ is incident a pair of three consecutive $3$-faces, and so $v$ has at most four $3$-neighbours by Lemma \ref{lem:uv-10}.  If $v$  has at most three $3$-neighbours, then $\mu^*(v)\geq 4-6\times \frac{1}{2}-3\times \frac{1}{3}= 0$  after $v$ sends at most  $\frac{1}{2}$ to each incident $3$-face by R4,  $\frac{1}{3}$ to each $3$-neighbour by R2. Thus we may assume that $v$ has exactly four $3$-neighbours. 
We note that if $v$ is incident to two $(5^+,5^+,8)$-triangles $f_1,f_2$, then $\mu^*(v)\geq 4-4\times \frac{1}{2}-2\times \frac{1}{3}-4\times \frac{1}{3}= 0$  after $v$ sends at most  $\frac{1}{2}$ to each incident $3$-faces other than $f_1,f_2$ by R4, $\frac{1}{3}$ to each of $f_1,f_2$ by R4,  $\frac{1}{3}$ to each $3$-neighbour by R2. Thus we may assume that $v$ is incident to at most one $(5^+,5^+,8)$-triangle. It then follows from Lemma \ref{lem:uv-10} that $v$ is incident to a $3$-vertex on two $3$-faces. Note that there are no two $3$-neighbours $u,w$ of $v$ such that each of them is adjacent to $v$ on two $3$-faces by Lemma \ref{lem:8-v-has-no-two-3-on-3-faces}. This implies that there exist exactly three $3$-neighbours of $v$, say $v_1,v_2,v_3$,  such that each $vv_i$, for $1\leq i \leq 3$, is contained in a $4^+$-face.
Since the configuration in Figure \ref{fig:8-8383--38} is reducible, we deduce that  $v$ is incident to a $5^+$-face $f$ containing two $3$-neighbours of $v$. By R5, $f$ sends at least $\frac{1}{3}$ to $v$.  Thus $\mu^*(v)\geq 4+\frac{1}{3}-6\times \frac{1}{2}-4\times \frac{1}{3}= 0$  after $v$ sends  at most $\frac{1}{2}$ to each incident $3$-face by R4,  $\frac{1}{3}$ to each $3$-neighbour by R2.  \medskip

Let us now suppose that $v$ has a $2$-neighbour. Denote by $v_1$ the $2$-neighbour of $v$. \medskip
%%%%%%%%%%%%%%%%%%%%%%%%%%%%%%%%%%

$\quad$\textbf{(6.1).} Let $m_3(v)\leq 2$. If $m_3(v)=0$ then $\mu^*(v)\geq 4-1-7\times \frac{1}{3}> 0$  after $v$ sends  $1$ to its $2$-neighbour by R1, $\frac{1}{3}$ to each $3$-neighbour by R2. If $m_3(v)\geq 1$, then, by Lemma \ref{lem:uv-10}, $v$ has at most six $3$-neighbours, and so $\mu^*(v)\geq 4-2\times\frac{1}{2}-1-6\times \frac{1}{3}= 0$  after $v$ sends  at most $\frac{1}{2}$ to each incident $3$-face by R4, $1$ to its $2$-neighbour by R1, $\frac{1}{3}$ to each $3$-neighbour by R2.\medskip

%%%%%%%%%%%%%%%%%%%%%%%%%%%%%%%%%%
$\quad$\textbf{(6.2).} Let $m_3(v)=3$. By Lemma \ref{lem:uv-10}, $v$ has at most five $3$-neighbours.  If $v$  has at most four $3$-neighbours, then  $\mu^*(v)\geq 4-3\times\frac{1}{2}-1-4\times \frac{1}{3}> 0$  after $v$ sends at most  $\frac{1}{2}$ to each incident $3$-face by R4, $1$ to its $2$-neighbour by R1, $\frac{1}{3}$ to each $3$-neighbour by R2. 
Thus we may assume that $v$ has exactly five $3$-neighbours. 
If $vv_1$ is contained in a $3$-face $f$, then $v$ is not adjacent to a $3$-vertex on a $3$-face   by Lemma \ref{lem:8-no-2-on3face-and-3-on3face}, and so we infer that $v$ can have at most four $3$-neighbours, contradicting the fact that $v$ has five $3$-neighbours. Hence, $vv_1$ is not contained in a $3$-face.

If  $v$ is incident to a $(5^+,5^+,8)$-triangle $f$, then $\mu^*(v)\geq 4-2\times \frac{1}{2}-\frac{1}{3}-1-5\times \frac{1}{3}= 0$  after $v$ sends at most $\frac{1}{2}$ to each incident $3$-face other than $f$ by R4, $\frac{1}{3}$ to  $f$ by R4, $1$ to its $2$-neighbour by R1, $\frac{1}{3}$ to each $3$-neighbour by R2. Thus we may assume that each $3$-face incident to $v$ contains a $4^-$-vertex.  We note that, by Lemma \ref{lem:uv-10},  $v$ is incident to two adjacent $3$-faces since $G$ has exactly five $3$-neighbours. %Thus we infer that $v$ has one $3$-face $f_1$ and two adjacent $3$-faces $f_2,f_3$, where $f_1$ may adjacent to one of $f_2,f_3$. 
In this case,  $v$ is incident to a $5^+$-face $f$ containing two $3^-$-neighbours of $v$ by Lemmas \ref{lem:uv-10} and \ref{lem:8-has-no-233383}. By R5, $f$ sends at least $\frac{1}{3}$ to $v$. Thus  $\mu^*(v)\geq 4+\frac{1}{3}-3\times\frac{1}{2}-1-5\times \frac{1}{3}> 0$  after $v$ sends  at most $\frac{1}{2}$ to each incident $3$-face by R4, $1$ to its $2$-neighbour by R1, $\frac{1}{3}$ to each $3$-neighbour by R2. \medskip

%%%%%%%%%%%%%%%%%%%%%%%%%%%%%%%%%%
$\quad$\textbf{(6.3).} Let $m_3(v)=4$. By Lemma \ref{lem:uv-10}, $v$ has at most five $3$-neighbours. If $v$  has at most three $3$-neighbours, then  $\mu^*(v)\geq 4-4\times\frac{1}{2}-1-3\times \frac{1}{3}= 0$  after $v$ sends at most $\frac{1}{2}$ to each incident $3$-face by R4, $1$ to its $2$-neighbour by R1, $\frac{1}{3}$ to each $3$-neighbour by R2. Thus we may assume that $v$ has either four or five $3$-neighbours.
If $vv_1$ is contained in a $3$-face $f$, then $v$ is not adjacent to a $3$-vertex on a $3$-face   by Lemma \ref{lem:8-no-2-on3face-and-3-on3face}, and so we infer that $v$ can have at most two $3$-neighbours as $G$ has no $4$-fan, contradicting the fact that $v$ has at least four $3$-neighbours. Hence, $vv_1$ is not contained in a $3$-face. On the other hand, we note that  $v$ is not adjacent to any $3$-vertex on two $3$-faces by Lemma \ref{lem:8-v-has-no2-when-3-diamond}, i.e., each $3$-neighbour of $v$ is incident a $4^+$-face containing $v$.

First suppose that $n_3(v)=4$.  Since $G$ has no $4$-fan, there are three possible cases by Lemma \ref{lem:uv-10}: $v$ is incident to either two non-adjacent $3$-faces and two adjacent $3$-faces or one $3$-face and three consecutive $3$-faces or a pair of two adjacent $3$-faces. We consider each case to show that $v$ is incident to a $5^+$-face containing two $3^-$-neighbours of $v$.
First, if $v$ has two non-adjacent $3$-faces and two adjacent $3$-faces then  $v$ must be incident to a $5^+$-face $f$ containing two $3^-$-neighbours of $v$, since the configurations in Figures \ref{fig:8-83--38--2} and \ref{fig:8-83-2--38} are reducible. 
Next, if $v$ is incident to one $3$-face and three consecutive  $3$-faces then $v$ is incident to a $5^+$-face $f$ containing two $3^-$-neighbours of $v$, since the configurations in Figure \ref{fig:8-83--38--2} and \ref{fig:8-83-2--38} are reducible. 
Finally, if $v$ is incident to a pair of two adjacent $3$-faces then $v$ is incident to a $5^+$-face $f$ containing two $3^-$-neighbours of $v$ since the configurations in Figure \ref{fig:8-83--38--2} and \ref{fig:8-83-2--38} are reducible. Consequently, by R5, the face $f$ sends at least $\frac{1}{3}$ to $v$. Therefore  $\mu^*(v)\geq 4+\frac{1}{3}-4\times\frac{1}{2}-1-4\times \frac{1}{3}= 0$ after $v$ sends at most  $\frac{1}{2}$ to each incident $3$-face by R4, $1$ to its $2$-neighbour by R1, $\frac{1}{3}$ to each $3$-neighbour by R2. 

Suppose next that $n_3(v)=5$. By Lemma \ref{lem:uv-10}, we observe that  $v$ is incident to a pair of two adjacent $3$-faces. It then follows that  $v$ is incident to at least two $5^+$-faces containing two $3^-$-neighbours of $v$  since the configurations in Figure \ref{fig:8-83--38--2} and \ref{fig:8-83-2--38} are reducible. By R5, each of those faces sends at least $\frac{1}{3}$ to $v$. Thus  $\mu^*(v)\geq 4+2\times\frac{1}{3}-4\times\frac{1}{2}-1-5\times \frac{1}{3}= 0$ after $v$ sends at most $\frac{1}{2}$ to each incident $3$-face by R4, $1$ to its $2$-neighbour by R1, $\frac{1}{3}$ to each $3$-neighbour by R2.

%%%%%%%%%%%%%%%%%%%%%%%%%%%%%%%%%%
$\quad$\textbf{(6.4).} Let $m_3(v)=5$. By Lemma \ref{lem:uv-10}, $v$ has at most four $3$-neighbours. If $v$  has at most one $3$-neighbour, then  $\mu^*(v)\geq 4-5\times\frac{1}{2}-1- \frac{1}{3}> 0$  after $v$ sends at most  $\frac{1}{2}$ to each incident $3$-face by R4, $1$ to its $2$-neighbour by R1, $\frac{1}{3}$ to each $3$-neighbour by R2. Thus we may assume that $v$ has at least two $3$-neighbours.
If $vv_1$ is contained in a $3$-face $f$, then it follows from Lemma \ref{lem:8-no-2-on3face-and-3-on3face} that $v$ is not adjacent to any $3$-vertex on a $3$-face  since $G$ has no $4$-fan. Therefore we infer that $v$ can have at most one $3$-neighbour, a contradiction to  the fact that $v$ has at least two $3$-neighbours. Hence, $vv_1$ is not contained in a $3$-face. Note that,  by Lemma \ref{lem:8-v-has-no2-when-3-diamond}, $v$ is not adjacent to any $3$-vertex on two $3$-faces. This implies that each $3$-neighbour of $v$ is incident a $4^+$-face containing $v$, and so $v$ has at most four $3$-neighbours.  Since $G$ has no $4$-fan, and $v_1$ does not belong to any $3$-face, we deduce that $v$ is incident to both two consecutive $3$-faces and three consecutive $3$-faces.

Let $n_3(v)=2$. Then $v$ is incident to at least one $5^+$-face containing two $3^-$-neighbours of $v$ since the configuration in Figures \ref{fig:8-83--38--2} and \ref{fig:8-83-2--38} are reducible. By R5, this face sends at least $\frac{1}{3}$ to $v$. 
Thus  $\mu^*(v)\geq 4+\frac{1}{3}-5\times\frac{1}{2}-1-2\times \frac{1}{3}>0$   
after $v$ sends at most $\frac{1}{2}$ to each incident $3$-face by R4, $1$ to its $2$-neighbour by R1, $\frac{1}{3}$ to each $3$-neighbour by R2.

Let $n_3(v)=3$. Then $v$ is incident to at least two $5^+$-faces containing two $3^-$-neighbours of $v$ since the configuration in Figures  \ref{fig:8-83--38--2} and \ref{fig:8-83-2--38} are reducible.  By R5, each of those faces sends at least $\frac{1}{3}$ to $v$. 
Thus  $\mu^*(v)\geq 4+2\times\frac{1}{3}-5\times\frac{1}{2}-1-3\times \frac{1}{3}>0$  
after $v$ sends at most $\frac{1}{2}$ to each incident $3$-face by R4, $1$ to its $2$-neighbour by R1, $\frac{1}{3}$ to each $3$-neighbour by R2.

Let $n_3(v)=4$. Recall that each $3$-neighbour of $v$ is incident a $4^+$-face containing $v$. Therefore, $v$ is incident to a $(5^+,5^+,8)$-triangle $f$ by Lemma \ref{lem:uv-10}. 
%
%Thus $\mu^*(v)\geq 4-4\times \frac{1}{2}-\frac{1}{3}-1-5\times \frac{1}{3}= 0$  after $v$ sends  $\frac{1}{2}$ to each incident $3$-faces other than $f$ by R4, $\frac{1}{3}$ to  $f$ by R4, $1$ to its $2$-neighbour by R1, $\frac{1}{3}$ to each $3$-neighbours by R2. 
%
On the other hand,   $v$ is incident to two $5^+$-faces containing two $3^-$-neighbours of $v$ since the configuration in Figures \ref{fig:8-83--38--2} and \ref{fig:8-83-2--38} are reducible.  By R5, each of those faces sends at least $\frac{1}{3}$ to $v$. Thus $\mu^*(v)\geq 4+2\times \frac{1}{3}-4\times \frac{1}{2}-\frac{1}{3}-1-4\times \frac{1}{3}= 0$  after $v$ sends at most $\frac{1}{2}$ to each incident $3$-face other than $f$ by R4, $\frac{1}{3}$ to  $f$ by R4, $1$ to its $2$-neighbour by R1, $\frac{1}{3}$ to each $3$-neighbour by R2.

%%%%%%%%%%%%%%%%%%%%%%%%%%%%%%%%%%
$\quad$\textbf{(6.5).} Let $m_3(v)=6$. Note that $vv_1$ is contained in a $3$-face since $G$ has no $4$-fan. Then $v$ is not adjacent to any $3$-vertex by Lemma \ref{lem:8-no-2-on3face-and-3-on3face}.
Hence  $\mu^*(v)\geq 4-6\times\frac{1}{2}-1= 0$  after $v$ sends at most $\frac{1}{2}$ to each incident $3$-face by R4, $1$ to its $2$-neighbour by R1.

\section*{Declarations}

\textbf{Conflict of interest} The authors has no conflicts of interest to declare that are relevant to the content of this article. 

\textbf{Availability of data and material} Data sharing not applicable to this article as no datasets were generated or analysed during the current study.

%\textbf{Funding} This work was supported by the Scientific and Technological Research Council of Turkey (No. 124F450).


\begin{thebibliography}{99}

\bibitem{behzad}
M. Behzad, Graph and their chromatic numbers (Doctoral thesis), Michigan State University, 1965.

\bibitem{borodin}
O.V. Borodin, On the total coloring of planar graphs, J. Reine Angew. Math. 394, 180-185, 1989.

\bibitem{geetha}
J. Geetha,   N. Narayanan,   K. Somasundaram, Total colorings-a survey. AKCE International Journal of Graphs and Combinatorics, 20(3), 339-351,  2023.

\bibitem{hou}
J. Hou,   Y. Zhu, G. Liu,  J. Wu,  M.  Lan, Total colorings of planar graphs without small cycles. Graphs and Combinatorics, 24(2), 91-100, 2008.


\bibitem{kostachka}
A.V. Kostochka, The total chromatic number of any multigraph with maximum degree five is at most seven, Discrete Math. 162, 199-214, 1996.

\bibitem{kowalik}
 L. Kowalik,  J. S. Sereni,  R. Skrekovski, Total-coloring of plane graphs with maximum degree nine. SIAM Journal on Discrete Mathematics, 22(4), 1462-1479, 2008.


\bibitem{rosenfeld}
M. Rosenfeld, On the total coloring of certain graphs. Israel Journal of Mathematics, 9, 396-402, 1971.

\bibitem{rous}
N. Roussel, X. Zhu, Total coloring of planar graphs of maximum degree eight. Information processing letters, 110(8-9), 321-324, 2010.

\bibitem{sanders}
D.P. Sanders, Y. Zhao, On total 9-coloring planar graphs of maximum degree sever, J. Graph Theory 31, 315-319, 1999.

\bibitem{sanchez}
 A.  Sanchez-Arroyo, Determining the total colouring number is NP-hard. Discret. Math., 78(3), 315-319, 1989.

\bibitem{sun}
X.Y. Sun, J.L. Wu, Y.W. Wu, J.F. Hou, Total colorings of planar graphs without adjacent triangles, Discrete Math. 309  202-206, 2009.

\bibitem{vizing}
V.G. Vizing, Some unsolved problems in graph theory, Russian Math. Surveys 23, 125-141, 1968.


\bibitem{wang2007}
W. F. Wang, Total chromatic number of planar graphs with maximum degree ten. J Graph Theory
54:91-102, 2007.



\bibitem{wang2017}
H. Wang,  B. Liu,  Y. Gu,  X. Zhang,  W. Wu,  H.  Gao,  Total coloring of planar graphs without adjacent short cycles. Journal of Combinatorial Optimization, 33, 265-274, 2017.

\bibitem{wang2023}
L. Wang,   H. Wang,  W.  Wu, Minimum total coloring of planar graphs with maximum degree 8. Journal of Combinatorial Optimization, 45(2), 82, 2023.




\bibitem{west}
D. B. West, Introduction to graph theory, volume 2. Prentice hall Upper Saddle River, 2001.

\bibitem{yap}
H. P.  Yap, Total colourings of graphs, volume 1623 of Lecture Notes in Mathematics, 1996.

\bibitem{zhu}
E. Zhu, J.   Xu, A sufficient condition for planar graphs with maximum degree 6 to be totally 8-colorable. Discrete Applied Mathematics, 223, 148-153, 2017.




\end{thebibliography}
\end{document}